 \let\mathscr\relax
\newtheorem{Lemma}{Lemma}[section]
\newtheorem{remark}[Lemma]{Remark}
\newtheorem{theorem}[Lemma]{Theorem}
\newtheorem{lemma}[Lemma]{Lemma}
\newtheorem{prop}[Lemma]{Proposition}
\newtheorem{corollary}[Lemma]{Corollary}
\newtheorem{definition}[Lemma]{Definition}
\newcommand{\Cal}[1]{{\mathcal #1}}
\newcommand{\Ann}{\operatorname{Ann}}
\newcommand{\Hom}{\operatorname{Hom}}
\newcommand{\Spec}{\operatorname{Spec}}
\newcommand{\Mod}{\operatorname{Mod-\!}}
\newcommand{\cmat}{\left(\begin{array}}
\newcommand{\fmat}{\end{array}\right)}
\DeclareMathOperator{\Ob}{Ob}
\begin{document}

\title{Exactness of cochain complexes via additive functors}
  \author[F.~Campanini and A.~Facchini]{Federico Campanini and Alberto Facchini}
\address{Dipartimento di Matematica ``Tullio Levi-Civita'', Universit\`a di Padova, 35121 Padova, Italy}
 \email{federico.campanini@phd.unipd.it, facchini@math.unipd.it}
\thanks{The second author is supported by Ministero dell'Istruzione, dell'Universit\`a e della Ricerca (Progetto di ricerca di rilevante interesse nazionale ``Categories, Algebras: Ring-Theoretical and Homological Approaches (CARTHA)'') and Dipartimento di Matematica ``Tullio Levi-Civita'' of Universit\`a di Padova (Research program DOR1828909 ``Anelli e categorie di moduli''). }
   \keywords{Relative exactness of sequences, Spectral category. \\ \protect \indent 2010 {\it Mathematics Subject Classification.} Primary 16E30.} 

\begin{abstract} We investigate the relation between the notion of $e$-exactness, recently introduced by Akray and Zebary, and some functors naturally related to it, such as the functor $P\colon\Mod R\to \Spec(\Mod R)$, where $\Spec(\Mod R)$ denotes the spectral category of $\Mod R$, and the localization functor with respect to the singular torsion theory.
\end{abstract}
 \maketitle
    
\section{Introduction}

This paper has been inspired by the article \cite{Irakeni}, in which the
authors considered  the following type of ``$e$-exact'' cochain complexes
of modules over a ring $R$.

     \begin{definition}\label{1.1} Let $R$ be a ring, not necessarily
commutative. A cochain complex
   $$\ldots\longrightarrow
M^{i-1}\stackrel{f^{i-1}}{\longrightarrow}M^i\stackrel{f^i}{\longrightarrow}M^{i+1}\stackrel{f^{i+1}}{\longrightarrow}\ldots$$
of right $R$-modules and right $R$-module morphisms is said to be {\em
$e$-exact} if $f^{i-1}(M^{i-1})$ is an essential submodule of $\ker(f^i)$
for every $i$. In particular, a {\em short $e$-exact sequence} is a
cochain complex of the type
$0\longrightarrow A_R \stackrel{f}{\longrightarrow} B_R
\stackrel{g}{\longrightarrow} C_R \longrightarrow 0$ with $f$ a
monomorphism, $f(A_R)$ an essential submodule of $\ker(g)$, and $g(B_R)$
an essential submodule of $C_R$. \end{definition}

In \cite{Irakeni}, $R$ denoted a commutative ring. It is immediately
clear that the hypothesis of $R$ commutative is not necessary in this
definition, and that one can consider right modules over any ring $R$,
possibly noncommutative. Here, we study the relation
between this interesting notion and three functors naturally related to
it. The first functor is the functor $P\colon\Mod R\to \Spec(\Mod R)$
into the spectral category of the category $\Mod R$, as defined by
Gabriel and Oberst in \cite{GO}. The second functor is the localization
with respect to the singular torsion theory (Goldie topology) studied in
particular by Goodearl (\cite[Chapter~2]{goodearl} and
\cite[Examples~VI.6.2 and IX.2.2]{S}). This allows us to extend all the
results in the first part of \cite{Irakeni} from modules over
commutative rings to modules over arbitrary, possibly non-commutative,
rings, essentially replacing the usual torsion-theory with the singular
torsion-theory, and replacing commutative integral domains with right
non-singular rings. In particular, this applies to our third functor,
the functor  $-\otimes_R Q$, where $R$ is a right Ore domain and $Q$ is
the classical right ring of fractions of $R$. It is interesting to
notice, as we prove in Theorem~\ref{no_functors}, that for a ring $R$, except for the trivial case of $R$ artinian semisimple, there do not exist additive functors
$F\colon \Mod R\to\Cal A$ into any abelian category $\Cal A$ with the
property that a cochain complex $A_R \stackrel{f}{\longrightarrow} B_R
\stackrel{g}{\longrightarrow} C_R$ is $e$-exact in the sense of Definition~\ref{1.1} if and only if the cochain
complex $F(A_R) \stackrel{F(f)}{\longrightarrow} F(B_R)
\stackrel{F(g)}{\longrightarrow} F(C_R)$ is exact.

In the last section, we partially extend our setting to arbitrary
Gabriel topologies.

In the following, all rings $R$ are associative ring with identity
$1\ne0$, and  all  modules  are  unitary right $R$-modules.

\section{$e$-exactness and the spectral category}\label{s:1}

For any Grothendieck category  $\Cal A$, it is possible to construct the {\em spectral category} $\Spec(\Cal
A)$, which is a Grothendieck category obtained from $\Cal A$ by formally inverting all
essential monomorphisms of~$\Cal A$~\cite{GO}. More precisely, for any fixed object $A$ in $\Cal A$, the set of all the essential subobjects of $A$ is downward directed, because the intersection of two essential subobjects of $A$ is an essential subobject of $A$. We will write $A'\le_e A$ for ``$A'$ is an essential subobject of $A$''. If we fix another object $B$ of~$\Cal A$ and apply the contravariant functor $\Hom _{\Cal A}(-,B)$ to the essential subobjects $A'$ of $A$ and to the embeddings $A'\to A''$, where $A'\le_e A$, $A''\le_e A$ and $A'\subseteq A''$, we get an upward directed family of abelian groups $\Hom _{\Cal A}(A',B)$ and abelian group morphisms $\Hom _{\Cal A}(A'',B)\to \Hom _{\Cal A}(A',B)$. Take the direct limit $\varinjlim \Hom _{\Cal A}(A',B)$, where 
$A'$ ranges in the set of all essential subobjects of $A$. The \index{spectral category}{\em spectral category} $\Spec(\Cal A)$ of $\Cal A$ has the same
objects as $\Cal A$ and, for objects $A$ and $B$ of $\Cal A$,
$$
\Hom_{\Spec(\Cal A)}(A,B):=\varinjlim \Hom _{\Cal A}(A',B),
$$
where the direct limit is taken over all essential subobjects $A'$ of $A$.
The category $\Spec(\Cal A)$ can also be constructed as follows. Let $\Cal E$ be the full subcategory of $\Cal A$ consisting of all the injective objects of $\Cal A$. Let $\Cal I$ be the ideal of the category $\Cal E$, where, for every $E_R,F_R\in\Ob(\Cal E)$, $\Cal I(E_R,F_R)$ consists of all morphisms $E_R\to F_R$ whose kernel is essential in $E_R$. Then $\Spec(\Cal A)$ is the quotient category $\Cal E/\Cal I$. 
A third equivalent presentation of $\Spec(\Cal A)$ is as the category of fractions $\Cal A[S^{-1}]$ (see \cite{WIASC, Morfismos, GZ}), where $S$ is the class of all essential monomorphisms.
There is a canonical functor $P\colon \Cal A \to \Spec(\Cal A)$, which is the identity on objects. It is a left exact functor that sends essential monomorphisms to isomorphisms.

The category $\Spec(\Mod R)$ is a Grothendieck category in which every short exact sequence splits, i.e., in which every object is projective and injective. From the discussion made above, it is clear that every right $R$-module $M_R$ becomes isomorphic to its injective envelope $E(M_R)$ in $\Spec(\Mod R)$. In particular, $P(M_R)=0$ if and only if $M_R=0$. Moreover, a right $R$-module morphism $f\colon M_R\to N_R$ becomes a zero morphism in $\Spec(\Mod R)$ whenever $\ker(f)$ is an essential submodule of $M_R$.

If we view the morphisms $f\colon A_R\to B_R$ in $\Spec(\Mod R)$ as morphisms $f'\colon A'_R\to B_R$ for some essential submodule $A'_R$ of $A_R$, the description of the kernel, cokernel and image of $f$ is the following. The kernel of $f'$ in $\Mod R$ has a complement $C'_R$ in $A'_R$, that is, there exists $C'_R\le A_R$ with $\ker(f')\oplus C'_R$ essential in $A'_R$ (equivalently, $C'_R$ is maximal in the set of all submodules $X'_R$ of $A'_R$ with $\ker(f')\cap X'_R=0$). Then the submodule $f'(C'_R)\cong C'_R$ of $B_R$ has a complement $D'_R$ in $B_R$, i.e., $f'(C'_R)\oplus D'_R$ is essential in $B_R$. Then $\ker(f)$ is the kernel of $f$ in $\Spec(\Mod R)$, $D'_R$ is the cokernel of $f$ in $\Spec(\Mod R)$ and $f'(C'_R)$ is its image.

In the description of $\Spec(\Mod R)$ as the quotient category $\Cal E/\Cal I$, the kernel and the cokernel of a morphism $f\colon E_R\to F_R$ are as follows. Assume that $f$ is represented by a morphism $f'\colon E_R\to F_R$ in $\Mod R$. The kernel of $f'$ has a complement in $E_R$, that is, there exists $C_R\le E_R$ with $\ker(f')\oplus C_R$ essential in $E_R$ (it suffices to take $C_R$ maximal in the set of all submodules $X_R$ of $E_R$ with $\ker(f')\cap X_R=0$). Then $E_R$ decomposes as $E_R=E(\ker(f'))\oplus E(C_R)$. Moreover, $f'(E(C_R))\cong E(C_R)$ is an injective submodule of $F_R$, so we can write $F_R=f'(E(C_R))\oplus D_R$, for a suitable submodule $D_R$ of $F_R$. Then $E(\ker(f'))$ is the kernel of $f$ in $\Spec(\Mod R)$, $D_R$ is the cokernel of $f$ in $\Spec(\Mod R)$ and $f'(E(C_R))$ is the image of $f$.

\bigskip

We want to investigate the relation between the notion of $e$-exactness (Definition~\ref{1.1}) and the functor $P\colon \Mod R\to \Spec(\Mod R)$.
 \begin{lemma} Let
  $M^{i-1}\stackrel{f^{i-1}}{\longrightarrow}M^i\stackrel{f^i}{\longrightarrow}M^{i+1}$ be a cochain complex 
 of right $R$-modules and right $R$-module morphisms. The following conditions are equivalent:
 
 {\rm (a)} The sequence \begin{equation}P(M^{i-1})\stackrel{P(f^{i-1})}{\longrightarrow}P(M^i)\stackrel{P(f^i)}{\longrightarrow}P(M^{i+1})\label{P}\end{equation} is exact in $\Spec(\Mod R)$.
 
  {\rm (b)} If $C_R$ is a complement of $\ker(f^{i-1})$ in $M^{i-1}$, then $f^{i-1}(C_R)$ is essential in $\ker(f^i)$.\end{lemma}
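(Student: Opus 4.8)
The plan is to rephrase conditions (a) and (b) as statements about subobjects of $P(M^i)$ and to compare these subobjects using the descriptions of kernels, images and cokernels in $\Spec(\Mod R)$ recalled above. Throughout I use that $P$ is left exact (hence preserves kernels and monomorphisms), that $P$ sends essential monomorphisms to isomorphisms, and that $P(X_R)=0$ if and only if $X_R=0$.

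First I would identify $\ker(P(f^i))$. Applying the left exact functor $P$ to the exact sequence $0\to\ker f^i\to M^i\to M^{i+1}$ (with third map $f^i$), we see that $\ker(P(f^i))$ is the subobject $P(\ker f^i)\hookrightarrow P(M^i)$, the monomorphism being $P$ applied to the module inclusion $\ker f^i\hookrightarrow M^i$. Next, fix a complement $C_R$ of $\ker f^{i-1}$ in $M^{i-1}$, so that $\ker f^{i-1}\oplus C_R\le_e M^{i-1}$ and $f^{i-1}$ restricts to an isomorphism on $C_R$; then the description of the image recalled above identifies $\im(P(f^{i-1}))$ with the subobject $P(f^{i-1}(C_R))\hookrightarrow P(M^i)$ (independently of the chosen complement, since images in an abelian category are canonical). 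As the sequence is a cochain complex, $f^{i-1}(C_R)\subseteq\ker f^i$, so this image is in fact a subobject of $P(\ker f^i)$, via $P$ applied to the module inclusion $f^{i-1}(C_R)\hookrightarrow\ker f^i$. Since $P(f^i)\circ P(f^{i-1})=P(f^i\circ f^{i-1})=0$ automatically, exactness of \eqref{P} at $P(M^i)$ is equivalent to the equality $\im(P(f^{i-1}))=\ker(P(f^i))$ of subobjects of $P(M^i)$, i.e.\ to the monomorphism $P(f^{i-1}(C_R))\hookrightarrow P(\ker f^i)$ being an isomorphism in $\Spec(\Mod R)$.

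Thus the statement reduces to the following elementary fact, which is the real content: for submodules $N\subseteq L$ of any right $R$-module, the morphism $P(N)\to P(L)$ induced by the inclusion is an isomorphism in $\Spec(\Mod R)$ if and only if $N\le_e L$. The ``if'' part is immediate since $P$ inverts essential monomorphisms. For ``only if'', I would apply the description of the cokernel in $\Spec(\Mod R)$ to the inclusion $N\hookrightarrow L$: its cokernel there is $P(D_R)$, where $D_R$ is a complement of $N$ in $L$ (so $N\oplus D_R\le_e L$). If $P(N)\to P(L)$ is an isomorphism it is in particular an epimorphism, so its cokernel is $0$; hence $P(D_R)=0$, hence $D_R=0$, and therefore $N=N\oplus D_R\le_e L$. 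Applying this with $N=f^{i-1}(C_R)$ and $L=\ker f^i$, and combining with the reduction of the previous paragraph, gives the equivalence of (a) and (b). (The universal quantifier over complements implicit in (b) is harmless: the reduction shows that, for \emph{every} complement $C_R$, the condition ``$f^{i-1}(C_R)\le_e\ker f^i$'' is equivalent to exactness of \eqref{P}, so its truth does not depend on $C_R$.)

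I do not anticipate a real obstacle; the argument is essentially bookkeeping plus the one-line cokernel computation. The single point requiring care is to keep the comparison ``$\im(P(f^{i-1}))=\ker(P(f^i))$'' honestly at the level of subobjects of $P(M^i)$ and to realize both subobjects concretely as $P$ applied to the honest chain of module inclusions $f^{i-1}(C_R)\hookrightarrow\ker f^i\hookrightarrow M^i$, so that the equality really is the question whether $f^{i-1}(C_R)$ is essential in $\ker f^i$.
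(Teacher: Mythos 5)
Your proposal is correct and follows essentially the same route as the paper: identify $\ker(P(f^i))$ with $P(\ker f^i)$ via left exactness of $P$, identify $\im(P(f^{i-1}))$ with $P(f^{i-1}(C_R))$ via the description of images in $\Spec(\Mod R)$, and observe that the inclusion $P(f^{i-1}(C_R))\hookrightarrow P(\ker f^i)$ is an isomorphism exactly when $f^{i-1}(C_R)\le_e\ker f^i$. Your explicit cokernel argument for the ``only if'' half of that last equivalence is a detail the paper leaves implicit, but it is the intended justification.
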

  
  \begin{proof} 
  The kernel of $f^{i-1}$ in $\Mod R$ and the kernel of $P(f^{i-1})$ in $\Spec(\Mod R)$ coincide because $P$ is left exact. Similarly for the kernels of $f^{i}$ and $P(f^{i})$.
  
    Let $C_R$ be a complement of the kernel $\ker(f^{i-1})$ of $f$ in $\Mod R$, so that $\ker(f^{i-1})\oplus C_R$ is essential in $M^{i-1}$. Then $f^{i-1}(C_R)$ is the image of $P(f^{i-1})$. Hence the sequence (\ref{P}) is exact in $P(M^i)$ if and only if $f^{i-1}(C_R)$ is essential in $\ker(f^i)$.\end{proof}

From the previous lemma we immediately get the following proposition.

  \begin{prop} Let $0\longrightarrow A_R \stackrel{f}{\longrightarrow} B_R \stackrel{g}{\longrightarrow} C_R \longrightarrow 0$ be a cochain complex 
 of right $R$-modules and right $R$-module morphisms. The following conditions are equivalent:
 
 {\rm (a)} The sequence $0\longrightarrow P(A_R) \stackrel{P(f)}{\longrightarrow} P(B_R) \stackrel{P(g)}{\longrightarrow} P(C_R) \longrightarrow 0$ is a short exact sequence in $\Spec(\Mod R)$.
 
  {\rm (b)} $f$ is a monomorphism in $\Mod R$, $f(A_R)$ an essential submodule of $\ker(g)$, and if $B'_R$ is a complement of $f(A_R)$ in $B_R$, then $g(B'_R)$ is essential in $C_R$.\end{prop}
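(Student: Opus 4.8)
The plan is to read the short exactness asserted in (a) as exactness at each of the three spots $P(A_R)$, $P(B_R)$, $P(C_R)$ separately, and to obtain each of these three conditions either from the left exactness of $P$ or from the previous Lemma applied to a suitable three-term subcomplex.

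First I would observe that one may assume $f$ to be a monomorphism. Indeed, if $\ker f\ne 0$ then $P(\ker f)\ne 0$, and since $P$ is left exact one has $\ker P(f)=P(\ker f)\ne 0$, so $P(f)$ is not a monomorphism and (a) fails; moreover (b) fails at once by its first clause. Conversely, when $f$ is a monomorphism, $\ker P(f)=P(\ker f)=P(0)=0$, so $P(f)$ is a monomorphism, which is exactly exactness of the sequence in (a) at $P(A_R)$. So it remains to prove, under the standing assumption that $f$ is a monomorphism, that the remaining clauses of (a) and (b) are equivalent.

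For exactness at $P(B_R)$ I would apply the Lemma to the three-term complex $A_R\xrightarrow{f}B_R\xrightarrow{g}C_R$ (a complex because $0\to A_R\to B_R\to C_R\to 0$ is assumed to be one): since $f$ is a monomorphism, $\ker f=0$, hence $A_R$ is a complement of $\ker f$ in $A_R$, and clause (b) of the Lemma becomes the assertion that $f(A_R)$ is essential in $\ker g$. For exactness at $P(C_R)$ I would apply the Lemma to the complex $B_R\xrightarrow{g}C_R\to 0$, where $C_R\to 0$ is the zero map onto the zero module: its clause (a) says that $P(B_R)\to P(C_R)\to 0$ is exact at $P(C_R)$, i.e.\ that $P(g)$ is an epimorphism, while its clause (b) says that $g(B'_R)$ is essential in $\ker(C_R\to 0)=C_R$ for $B'_R$ a complement of $\ker g$ in $B_R$.

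It then remains to splice the three items together. Granting $f$ monic and $f(A_R)\le_e\ker g$, for any submodule $X_R$ of $B_R$ one has $X_R\cap f(A_R)=0$ if and only if $X_R\cap\ker g=0$: the forward implication is immediate from $f(A_R)\subseteq\ker g$, and the reverse one holds because $f(A_R)\le_e\ker g$ forces any nonzero submodule of $\ker g$ — in particular $X_R\cap\ker g$, were it nonzero — to meet $f(A_R)$ nontrivially. Hence the complements of $f(A_R)$ in $B_R$ coincide with the complements of $\ker g$ in $B_R$, so the $P(C_R)$-exactness condition is precisely the last clause of (b). Since the sequence in (a) is a short exact sequence exactly when it is exact at $P(A_R)$, at $P(B_R)$ and at $P(C_R)$, the conjunction of the three items is exactly (b), giving (a) $\Leftrightarrow$ (b). I do not expect a genuine obstacle here, as the argument is pure bookkeeping resting on the Lemma; the one point deserving a word of care is the identification of these two families of complements, together with the remark that the two readings of the last clause of (b) — ``there exists a complement $B'_R$ with $g(B'_R)\le_e C_R$'' versus ``every complement $B'_R$ has $g(B'_R)\le_e C_R$'' — agree, which is clear because $\coker P(g)$ is a well-defined object of $\Spec(\Mod R)$ and hence its vanishing cannot depend on the complement chosen to compute it.
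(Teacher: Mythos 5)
Your proof is correct and takes essentially the same route as the paper, which states that the proposition follows ``immediately'' from the preceding Lemma; your argument is exactly the bookkeeping that justifies this, applying the Lemma at the spots $P(B_R)$ and $P(C_R)$, using left exactness of $P$ together with $P(M_R)=0\Leftrightarrow M_R=0$ at $P(A_R)$, and correctly identifying the complements of $f(A_R)$ with those of $\ker(g)$ once $f(A_R)\le_e\ker(g)$ is in hand.
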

  
\begin{corollary}\label{2.3}
Let
\begin{equation}
0\longrightarrow A_R \stackrel{f}{\longrightarrow} B_R \stackrel{g}{\longrightarrow} C_R \longrightarrow 0\label{1}
\end{equation}
be a cochain complex of right $R$-modules and right $R$-module morphisms such that the sequence $0\longrightarrow P(A_R) \stackrel{P(f)}{\longrightarrow} P(B_R) \stackrel{P(g)}{\longrightarrow} P(C_R) \longrightarrow 0$ is a short exact sequence in $\Spec(\Mod R)$. Then the cochain complex $($\refeq{1}$)$ is a short $e$-exact sequence.\end{corollary}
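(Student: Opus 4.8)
The plan is to deduce the statement directly from the preceding proposition. That proposition identifies the short exactness of $0\to P(A_R)\to P(B_R)\to P(C_R)\to 0$ in $\Spec(\Mod R)$ with the conjunction of the following three facts: $f$ is a monomorphism in $\Mod R$; $f(A_R)$ is an essential submodule of $\ker(g)$; and $g(B'_R)$ is essential in $C_R$ whenever $B'_R$ is a complement of $f(A_R)$ in $B_R$. The first two of these are exactly two of the three requirements appearing in Definition~\ref{1.1} for \eqref{1} to be a short $e$-exact sequence, so the only thing that remains to be proved is the third requirement of Definition~\ref{1.1}, namely that $g(B_R)$ is an essential submodule of $C_R$.

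To obtain this, I would fix a complement $B'_R$ of $f(A_R)$ in $B_R$ (one exists by Zorn's Lemma) and consider the chain of submodules $g(B'_R)\subseteq g(B_R)\subseteq C_R$. By the preceding proposition, $g(B'_R)\le_e C_R$; and a submodule of $C_R$ that contains an essential submodule of $C_R$ is again essential in $C_R$, because every nonzero submodule of $C_R$ meets $g(B'_R)$ and hence meets the larger submodule $g(B_R)$. Therefore $g(B_R)\le_e C_R$, and all three conditions of Definition~\ref{1.1} are satisfied, so \eqref{1} is a short $e$-exact sequence.

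No genuine obstacle arises here: the corollary is an immediate consequence of the preceding proposition together with the elementary fact that enlarging an essential submodule keeps it essential. It is worth stressing, though — and this is precisely why the corollary is only a one-way implication — that the converse would require the strictly stronger property that $g(B'_R)$ itself (for $B'_R$ a complement of $f(A_R)$) be essential in $C_R$, and this can fail even for genuinely exact sequences: for the exact sequence $0\to 2\Z\to\Z\to\Z/2\Z\to 0$ one has $\ker(g)=2\Z$ essential in $\Z$, so $P(g)=0$ while $P(\Z/2\Z)\neq0$, and hence $P$ applied to the sequence is not exact at $P(\Z/2\Z)$.
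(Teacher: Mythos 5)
Your proof is correct and is precisely the immediate deduction from the preceding proposition that the paper intends (the paper gives no separate argument for the corollary): the only nontrivial point is that $g(B'_R)\le_e C_R$ together with $g(B'_R)\subseteq g(B_R)\subseteq C_R$ forces $g(B_R)\le_e C_R$, which you handle correctly. Your closing remark, with the example $0\to 2\Z\to\Z\to\Z/2\Z\to 0$ showing the converse fails, is also accurate and is a nice complement to the statement.
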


\section{Homological lemmas with e-exact sequences}\label{section_singular}

The aim of this section is generalizing some results of \cite{Irakeni} to the non-commutative case. We will replace commutative domains with right non-singular rings and torsion [resp. torsionfree] modules with singular [resp. non-singular] modules. Let $R$ be a ring and let $\mathfrak{G}$ denote the family of all essential right ideals of $R$. Given a right $R$ module $M_R$, the singular submodule of $M_R$ is defined by $Z(M_R):=\{x \in M \mid xI=0 \mbox{ for some } I \in \mathfrak{G}\}=\{x\in M \mid \Ann_r(x) \in \mathfrak{G}\}$ \cite[Ch.~1, Sec.~D]{goodearl}. A module $M_R$ is called {\em singular} if $Z(M_R)=M_R$ and it is called {\em non-singular} if $Z(M_R)=0$. If $M$ is a module over a commutative domain $R$, then $Z(M)$ is equal to the torsion submodule $t(M)$ of $M$, so that $M$ is singular [resp. non-singular] if and only if $M$ is torsion [resp. torsionfree]. Notice that $Z(-)$ defines a functor $\Mod R \to \Mod R$. Indeed, given a right $R$-module morphism $f\colon M_R\to N_R$, we have $f(Z(M))\leq Z(N)$, and so a morphism $Z(f)\colon Z(M)\to Z(N)$. The functor $Z(-)$ is an idempotent preradical (cf. \cite[Chapter~VI]{S}), but it is not a radical in general. The smallest radical containing $Z$ is denoted by $Z_2$ and it is defined, for every right $R$-module $M_R$, by $Z_2(M)/Z(M)=Z(M/Z(M))$ \cite[Proposition~6.2]{S}. A ring $R$ is {\em right non-singular} if $Z_r(R):=Z(R_R)=0$ (notice that the dual situation of ``singular ring'' never occurs, since $1\notin Z(R_R)$). If $R$ is a right non-singular ring, then $Z(-)$ is actually a radical, that is, $Z(M/Z(M))=0$ for every right $R$-module $M_R$ (i.e., $Z_2(M)=Z(M)$ for every module $M_R$) \cite[Proposition~1.23]{goodearl}.

The following results extend  to the non-commutative case results proved in \cite{Irakeni}. A morphism $f\colon A\to B$ is said to be {\em $e$-epi} if $f(A)$ is essential in $B$.

\begin{prop} \label{4lemma}{\rm (4-lemma for $e$-exact sequences)}
Let $R$ be a right non-singular ring and consider the following commutative diagram of right  $R$-module morphisms with $e$-exact rows.
$$
\xymatrix{
A_1 \ar[r]^{f_1}\ar[d]^{t_1} &   A_2 \ar[r]^{f_2} \ar[d]^{t_2} & A_3 \ar[r]^{f_3} \ar[d]^{t_3} & A_4 \ar[d]^{t_4} \\
B_1 \ar[r]^{g_1} &   B_2 \ar[r]^{g_2} & B_3\ar[r]^{g_3} & B_4.
}
$$
\begin{enumerate}
\item[{\rm (1)}]
If $t_1,t_3$ are $e$-epis, $t_4$ is monic and $B_2$ is non-singular, then $t_2$ is $e$-epi.
\item[{\rm (2)}]
If $t_1$ is $e$-epi and $t_2,t_4$ are monic, then $\ker(t_3)$ is a singular module. In particular, if $A_3$ is a non-singular module, then $t_3$ is monic.
\end{enumerate}
\end{prop}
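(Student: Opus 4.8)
The plan is to prove both parts by diagram chases, but to organize the bookkeeping through the singular torsion theory rather than juggling essential right ideals by hand. Throughout I will use the standing hypothesis that $R$ is right non-singular, so that $Z$ is a radical (cf. \cite[Proposition~1.23]{goodearl}) and the class of singular modules is a hereditary torsion class: it is closed under submodules, quotient modules, direct sums and extensions. I will also use repeatedly the elementary observation that if $N'\le_e N$ then $N/N'$ is singular (for $n\in N$ the set $\{r\in R\mid nr\in N'\}$ is the preimage of the essential submodule $nR\cap N'$ of $nR$, hence an essential right ideal), together with the commutativity relations $g_ih_{?}=t_{?}f_i$ read off the diagram.

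For part~(2), a first short chase shows $\ker(t_3)\subseteq\ker(f_3)$: if $t_3(x)=0$ then $t_4 f_3(x)=g_3 t_3(x)=0$, so $f_3(x)=0$ since $t_4$ is monic. Because the top row is $e$-exact at $A_3$ we have $f_2(A_2)\le_e\ker(f_3)$, so $\ker(f_3)/f_2(A_2)$ is singular; hence so is its submodule $(\ker(t_3)+f_2(A_2))/f_2(A_2)\cong\ker(t_3)/(\ker(t_3)\cap f_2(A_2))$. Since singular modules are closed under extensions, it therefore suffices to prove that $\ker(t_3)\cap f_2(A_2)$ is singular. So take $y=f_2(a)$ with $a\in A_2$ and $t_3(y)=0$; from $t_3 f_2=g_2 t_2$ we get $t_2(a)\in\ker(g_2)$. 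Inside $\ker(g_2)$ consider the submodule $g_1 t_1(A_1)=t_2 f_1(A_1)$: the quotient $\ker(g_2)/g_1(B_1)$ is singular because the bottom row is $e$-exact at $B_2$, while $g_1(B_1)/g_1 t_1(A_1)$ is a homomorphic image of $B_1/t_1(A_1)$, which is singular because $t_1$ is $e$-epi; hence $\ker(g_2)/t_2 f_1(A_1)$ is singular. Thus there is $J\in\mathfrak G$ with $t_2(a)J\subseteq t_2 f_1(A_1)$, i.e. $t_2(aJ)\subseteq t_2(f_1(A_1))$, and since $t_2$ is monic, $aJ\subseteq f_1(A_1)\subseteq\ker(f_2)$. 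Consequently $yJ=f_2(aJ)=0$, so $\Ann_r(y)\in\mathfrak G$ and $y\in Z(A_3)$. This shows $\ker(t_3)$ is singular; and if moreover $A_3$ is non-singular then $\ker(t_3)=Z(\ker t_3)\subseteq Z(A_3)=0$, so $t_3$ is monic.

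For part~(1) I will run the classical ``four lemma'' chase proving surjectivity of the middle map, performing each step only after multiplying the tracked element by a suitable essential right ideal, and using the non-singularity of $B_2$ to guarantee these multiplications never annihilate it. Fix $0\ne b_2\in B_2$; it is enough to produce $r\in R$ with $0\ne b_2 r\in t_2(A_2)$, which gives $t_2(A_2)\le_e B_2$. Since $t_3$ is $e$-epi, $\{r\mid g_2(b_2)r\in t_3(A_3)\}$ is essential, so by non-singularity of $B_2$ we may choose $r_1$ in it with $b_2 r_1\ne0$ and write $g_2(b_2 r_1)=t_3(a_3)$. Then $t_4 f_3(a_3)=g_3 t_3(a_3)=0$, so $f_3(a_3)=0$ and $a_3\in\ker(f_3)$. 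Since the top row is $e$-exact at $A_3$, $\{r\mid a_3 r\in f_2(A_2)\}$ is essential, and we pick $r_2$ in it with $b_2 r_1 r_2\ne0$ and write $a_3 r_2=f_2(a_2)$. A short computation gives $g_2(b_2 r_1 r_2-t_2(a_2))=0$; using $e$-exactness of the bottom row at $B_2$ we pick $r_3$ with $(b_2 r_1 r_2-t_2(a_2))r_3\in g_1(B_1)$ and $b_2 r_1 r_2 r_3\ne0$, write $(b_2 r_1 r_2-t_2(a_2))r_3=g_1(b_1)$; then, using that $t_1$ is $e$-epi, we pick $r_4$ with $b_1 r_4\in t_1(A_1)$ and $b_2 r_1 r_2 r_3 r_4\ne0$, and write $b_1 r_4=t_1(a_1)$. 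Unwinding, $(b_2 r_1 r_2-t_2(a_2))r_3 r_4=g_1 t_1(a_1)=t_2 f_1(a_1)$, whence $b_2(r_1 r_2 r_3 r_4)=t_2(a_2 r_3 r_4+f_1(a_1))\in t_2(A_2)$, a nonzero element by construction.

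The point requiring genuine care is precisely that, in contrast with ordinary exact sequences, a chase on $e$-exact rows yields at each step not an equality but only a ``containment modulo an essential right ideal'', and one must control how these accumulate. In part~(1) this is done by threading the non-singularity of $B_2$ through the whole chase so that the running product $b_2 r_1\cdots r_k$ stays nonzero; in part~(2) it is done more cleanly by trading the chase for the closure properties of the hereditary torsion class of singular modules, and this is exactly where right non-singularity of $R$ is used, namely for closure under extensions. Besides this, the only routine verifications are the repeated use of the fact that $N'\le_e N$ forces $N/N'$ singular, and, in part~(1), that each index set of the form $\{r\mid zr\in(\text{essential submodule})\}$ is indeed an essential right ideal, which follows by pulling back along the cyclic module $zR$.
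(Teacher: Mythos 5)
Your proof is correct, but it takes a genuinely different route from the paper's. The paper proves part (1) by transporting the whole diagram along the localization functor $(-)_{\mathfrak G}$: since that functor is exact and kills singular modules, the $e$-exact rows become exact rows of $R_{\mathfrak G}$-modules, Lemma~\ref{S_functor} turns the $e$-epis $t_1,t_3$ into surjections, the \emph{classical} 4-lemma gives surjectivity of $(t_2)_{\mathfrak G}$, and Lemma~\ref{indietro} (using non-singularity of $B_2$) pulls this back to $t_2$ being $e$-epi; part (2) is left to the reader but follows the same pattern, using that $M_{\mathfrak G}=0$ exactly when $M$ is singular. You instead run the diagram chase directly in $\Mod R$: in part (1) you track a single element $b_2$ and, at each of the four steps, replace the usual equality by a ``containment after multiplying by an essential right ideal,'' using non-singularity of $B_2$ to keep the running product $b_2r_1\cdots r_k$ nonzero; in part (2) you replace the chase by closure properties of the singular torsion class (hereditariness, quotients, and --- this is where right non-singularity of $R$ enters --- extensions). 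Your argument is more elementary and self-contained, avoids the machinery of the maximal right quotient ring, and makes visible exactly where each hypothesis is used; the paper's functorial argument buys uniformity, since the same two lemmas immediately yield the 5-lemma and the $3\times3$-lemma and generalize verbatim to perfect stable Gabriel topologies in Section~\ref{Section_Gabriel}. The only blemishes are cosmetic: the commutativity relations are misrendered in your opening paragraph (they should read $g_it_i=t_{i+1}f_i$), and in step three of part (1) you should note that if $b_2r_1r_2-t_2(a_2)=0$ you are already done (the chase still goes through with $r_3=1$ in that degenerate case).
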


\begin{prop}\label{5lemma}{\rm  (5-lemma for $e$-exact sequences)} 
Let $R$ be a right non-singular ring and consider the following commutative diagram of right  $R$-module morphisms with $e$-exact rows
$$
\xymatrix{
A_1 \ar[r]^{f_1}\ar[d]^{t_1} &   A_2 \ar[r]^{f_2} \ar[d]^{t_2} & A_3 \ar[r]^{f_3} \ar[d]^{t_3} & A_4 \ar[r]^{f_4} \ar[d]^{t_4} & A_5 \ar[d]^{t_5}\\
B_1 \ar[r]^{g_1} &   B_2 \ar[r]^{g_2} & B_3\ar[r]^{g_3} & B_4 \ar[r]^{g_4} & B_5.
}
$$
where $B_3$ is a non-singular right $R$-modules.
\begin{enumerate}
\item[{\rm (1)}]
If $t_2$ and $t_4$ are $e$-epis, and $t_5$ is monic, then $t_3$ is $e$-epi.
\item[{\rm (2)}]
If $t_1$ is $e$-epi and $t_2,t_4$ are monic, then $\ker(t_3)$ is singular. In particular, if $A_3$ is non-singular, then $t_3$ is monic.
\end{enumerate}
\end{prop}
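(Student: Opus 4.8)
The plan is to deduce both parts from the $4$-lemma for $e$-exact sequences (Proposition~\ref{4lemma}), in exactly the way the classical $5$-lemma is obtained from the classical $4$-lemma: one restricts the given five-column diagram to a suitable four-column sub-diagram and quotes Proposition~\ref{4lemma}. The one preliminary point is that such a sub-diagram again has $e$-exact rows, and this is immediate, since $e$-exactness of a row at an interior term $M^i$ is the condition $f^{i-1}(M^{i-1})\le_e\ker(f^i)$, which involves only the three consecutive terms $M^{i-1},M^i,M^{i+1}$ and is therefore inherited by any sub-complex still containing them.

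For part~(1), I would apply Proposition~\ref{4lemma}(1) to the sub-diagram formed by the last four columns: the rows $A_2\to A_3\to A_4\to A_5$ and $B_2\to B_3\to B_4\to B_5$ together with the vertical maps $t_2,t_3,t_4,t_5$. These rows are $e$-exact at $A_3,A_4$ and at $B_3,B_4$ by hypothesis, $t_2$ and $t_4$ are $e$-epis, $t_5$ is monic, and $B_3$ is non-singular. Identifying the four columns of Proposition~\ref{4lemma} with $A_2,A_3,A_4,A_5$ in order (so that its ``$B_2$'' becomes our $B_3$), these are precisely the hypotheses of Proposition~\ref{4lemma}(1), whose conclusion is that the second of the four vertical maps, namely our $t_3$, is an $e$-epi.

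For part~(2), I would instead apply Proposition~\ref{4lemma}(2) to the sub-diagram formed by the first four columns: the rows $A_1\to A_2\to A_3\to A_4$ and $B_1\to B_2\to B_3\to B_4$ with vertical maps $t_1,t_2,t_3,t_4$. These rows are $e$-exact at $A_2,A_3$ and at $B_2,B_3$, $t_1$ is an $e$-epi, and $t_2,t_4$ are monic; Proposition~\ref{4lemma}(2) then gives that $\ker(t_3)$ is singular. For the last assertion, if $A_3$ is non-singular then every singular submodule of $A_3$ --- in particular $\ker(t_3)$ --- is contained in $Z(A_3)=0$, so $t_3$ is monic.

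I do not expect a real obstacle here; the argument is bookkeeping once Proposition~\ref{4lemma} is available. The two things not to overlook are that part~(1) never involves the first column $(A_1,B_1,t_1)$ and part~(2) never involves the last column $(A_5,B_5,t_5)$, so that each half is genuinely a four-column statement, and that the standing hypothesis ``$B_3$ non-singular'' is used only in part~(1), where it plays the role of ``$B_2$ non-singular'' in Proposition~\ref{4lemma}(1). All the substantive work --- juggling essential submodules, singular modules, and complements --- has already been done in the proof of the $4$-lemma.
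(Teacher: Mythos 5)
Your reduction is correct. Both restrictions check out: $e$-exactness of a row at an interior term involves only the two adjacent maps, so it passes to any consecutive sub-complex containing them; for part (1) the columns $2$--$5$ then satisfy exactly the hypotheses of Proposition~\ref{4lemma}(1) after the index shift (with the standing hypothesis that $B_3$ is non-singular playing the role of ``$B_2$ non-singular''), and for part (2) the columns $1$--$4$ satisfy those of Proposition~\ref{4lemma}(2); the closing observation that a singular submodule of a non-singular module vanishes (since $N$ singular is equivalent to $N\subseteq Z(A_3)$) finishes the last claim. The paper, however, takes a different and uniform route: it does not derive the 5-lemma from the $e$-exact 4-lemma, but instead repeats for the five-column diagram the same localization argument written out for Proposition~\ref{4lemma}(1) --- apply the exact functor $(-)_{\mathfrak{G}}$, note that $e$-exact rows become exact rows of $R_{\mathfrak{G}}$-modules by Lemma~\ref{S_functor}, invoke the classical 5-lemma over $R_{\mathfrak{G}}$, and transfer the conclusion back via Lemma~\ref{indietro} for the $e$-epi statement, respectively via the fact that $M_{\mathfrak{G}}=0$ exactly when $M$ is singular for the kernel statement. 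Your approach is somewhat more economical, reusing the already-established relative 4-lemma rather than re-running the localization machinery, at the mild cost of relying on part (2) of Proposition~\ref{4lemma}, whose proof the paper leaves to the reader. There is no gap in either route.
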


\begin{prop} {\rm ($3\times 3$-lemma for $e$-exact sequences)}\label{33lemma}
Let $R$ be a non-singular ring and consider the following commutative diagram of right $R$-modules
$$
\xymatrix{
 & 0 \ar[d] & 0 \ar[d] & 0 \ar[d] & \\
0\ar[r] & A_1 \ar[r]^{f_1} \ar[d]^{u_1} & A_2 \ar[r]^{f_2} \ar[d]^{v_1} & A_3 \ar[r] \ar[d]^{p_1} & 0 \\
0\ar[r] & B_1 \ar[r]^{g_1} \ar[d]^{u_2} & B_2 \ar[r]^{g_2} \ar[d]^{v_2} & B_3 \ar[r] \ar[d]^{p_2} & 0 \\
0\ar[r] & C_1 \ar[r]^{h_1} \ar[d] & C_2 \ar[r]^{h_2} \ar[d] & C_3 \ar[r] \ar[d] & 0 \\
 & 0 & 0 & 0 & 
}
$$
where $A_2$ and $A_3$ are non-singular modules. If the columns and the two bottom rows are $e$-exact, then the top row is also $e$-exact.
\end{prop}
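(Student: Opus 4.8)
The plan is to reduce the statement to the classical $3\times3$-lemma in an abelian category, applied to the localization of $\Mod R$ at the singular torsion theory. Since $R$ is right non-singular, the class $\mathcal T$ of singular right $R$-modules is the torsion class of a hereditary torsion theory whose torsion-free class consists of the non-singular modules; let $L\colon\Mod R\to(\Mod R)/\mathcal T$ be the associated localization functor. Recall that $L$ is an \emph{exact} functor into a Grothendieck category and that $L(M)=0$ if and only if $M$ is singular. The whole argument rests on one translation principle: for a short cochain complex $0\to A\xrightarrow{f}B\xrightarrow{g}C\to0$ of right $R$-modules, (i) if it is short $e$-exact then $0\to L(A)\xrightarrow{L(f)}L(B)\xrightarrow{L(g)}L(C)\to0$ is a short exact sequence in $(\Mod R)/\mathcal T$, and (ii) if moreover $A$, $B$ and $C$ are non-singular, the converse holds as well.

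To prove (i) I would use the entirely general fact that $N\le_e M$ forces the quotient $M/N$ to be singular --- for $x\in M\setminus N$ the right ideal $\{r\in R\mid xr\in N\}$ is essential, so that $x+N\in Z(M/N)$ --- together with the exactness of $L$: applying $L$ to the short exact sequences $0\to f(A)\to\ker g\to\ker g/f(A)\to0$ and $0\to g(B)\to C\to C/g(B)\to0$ and using that $L$ annihilates the singular quotients gives $\operatorname{Im}L(f)=L(f(A))=L(\ker g)=\ker L(g)$ and surjectivity of $L(g)$, while $L(f)$ is monic because $f$ is. For (ii) I would invoke the converse fact, which holds once $M$ is non-singular: $N\le_e M$ if and only if $M/N$ is singular (a nonzero submodule $N'\le M$ with $N\cap N'=0$ would embed in the singular module $M/N$ and so be a nonzero singular submodule of the non-singular module $M$). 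Then short exactness of $0\to L(A)\to L(B)\to L(C)\to0$ forces $L(\ker g/f(A))=0$ and $L(C/g(B))=0$, whence $f(A)\le_e\ker g$ and $g(B)\le_e C$, while $\ker f$ --- being a singular submodule of the non-singular module $A$ --- is zero; thus the complex is short $e$-exact.

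The diagram chase is then short. First I would note that the top row is a cochain complex with $f_1$ monic: $v_1f_1=g_1u_1$ is monic because $u_1,g_1$ are, so $f_1$ is monic, and $p_1f_2f_1=g_2v_1f_1=g_2g_1u_1=0$ with $p_1$ monic gives $f_2f_1=0$. In particular $A_1$ embeds, via $f_1$, in the non-singular module $A_2$, so $A_1,A_2,A_3$ are all non-singular. Applying the exact functor $L$ to the whole diagram, and using (i) on the $e$-exact columns and on the two $e$-exact bottom rows, I obtain a commutative diagram in $(\Mod R)/\mathcal T$ whose three columns and whose two lower rows are short exact and whose top row $0\to L(A_1)\to L(A_2)\to L(A_3)\to0$ is a cochain complex. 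The classical $3\times3$-lemma --- exact columns together with the two lower rows force the top row to be exact --- now makes this top row short exact, and (ii), applicable because $A_1,A_2,A_3$ are non-singular, says exactly that $0\to A_1\xrightarrow{f_1}A_2\xrightarrow{f_2}A_3\to0$ is short $e$-exact, which is the assertion.

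The main obstacle, and the reason the non-singularity hypotheses are indispensable, is part (ii): going back from exactness after localization to honest $e$-exactness. Direction (i) needs no non-singularity assumption at all --- which is precisely what lets me apply it to the columns and the bottom rows, whose outer terms are not assumed non-singular --- but its converse is genuinely false in general; consistently with Theorem~\ref{no_functors}, $e$-exactness cannot be detected by any exact functor, and recovering the essentiality statements $f(A)\le_e\ker g$ and $g(B)\le_e C$ from the vanishing of $L$ on the relevant quotients really uses that $\ker g\subseteq B$ and $A_3$ are non-singular. A secondary point to pin down is that the exact form of the $3\times3$-lemma invoked here --- deducing the top row from the two lower ones --- is the standard one and follows by a diagram chase valid in any abelian category.
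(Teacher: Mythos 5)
Your proof is correct and follows essentially the same route as the paper's (which is only sketched for this proposition): apply an exact localization functor that annihilates precisely the singular modules, invoke the classical $3\times 3$-lemma in the target abelian category, and translate back using that for a non-singular module $M$ one has $N\le_e M$ if and only if $M/N$ is singular. The only difference is cosmetic: you work with the Gabriel quotient $(\Mod R)/\mathcal T$ and prove the two translation principles from scratch, where the paper uses the equivalent concrete functor $(-)_{\mathfrak G}=E(-/Z(-))$ together with Corollary~\ref{2.4} and Lemma~\ref{indietro}.
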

 
In order to prove Propositions~\ref{4lemma}, \ref{5lemma} and \ref{33lemma}, recall a procedure to obtain a functor that, in some sense, can be viewed as a generalization of the ``localization functor'' for commutative rings to the non-commutative setting. We refer the reader to \cite[Chapter~2]{goodearl} for more details about this construction. Anyway, we prefer to adopt the notation used in \cite{S}, in which these topics are treated from a more general point of view (we will discuss this more general framework in Section~\ref{Section_Gabriel}).  Let $R$ be a right non-singular ring, that is, $Z_r(R)=0$. For any right $R$-module $A$, fix an injective envelope for $A/Z(A)$ and denote it by $A_{\mathfrak{G}}$. According to \cite[Proposition~1.23 (a)]{goodearl}, $A_\mathfrak{G}$ is a non-singular module. Moreover, \cite[Lemma~2.1]{goodearl} ensures that, for any given right $R$-module morphism $f\colon A_R \rightarrow B_R$, the induced morphism $\bar{f}:A/Z(A)\rightarrow B/Z(B)$ extends uniquely to a morphism $f_\mathfrak{G}\colon A_\mathfrak{G} \rightarrow B_\mathfrak{G}$. We have that $R_\mathfrak{G}$ is a ring containing $R$, and $(-)_\mathfrak{G} \colon \Mod R \rightarrow \Mod R_\mathfrak{G}$ turns out to be an exact functor.

\begin{Lemma}\label{S_functor}
Let $R$ be a right non-singular ring and let $f\colon A \rightarrow B$ be an $e$-epi right $R$-module morphism. Then
\begin{enumerate}
\item
$\bar{f}\colon A/Z(A)\rightarrow B/Z(B)$ is $e$-epi;
\item
$f_\mathfrak{G} \colon A_\mathfrak{G} \rightarrow B_\mathfrak{G}$ is surjective.
\end{enumerate}
\end{Lemma}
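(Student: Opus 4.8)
The plan is to establish (1) directly, via an essentiality computation performed modulo the singular submodule, and then to obtain (2) by running a short exact sequence through the exact functor $(-)_\mathfrak{G}$.

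For (1), I would first record that $\bar f(A/Z(A))=(f(A)+Z(B))/Z(B)$ as a submodule of $B/Z(B)$, so that the claim becomes $(f(A)+Z(B))/Z(B)\le_e B/Z(B)$. Then I would take a submodule $N\le B$ with $Z(B)\subseteq N$ and $(N/Z(B))\cap\bigl((f(A)+Z(B))/Z(B)\bigr)=0$, observe that this is equivalent to $f(A)\cap N\subseteq Z(B)$, and aim to conclude $N=Z(B)$. For any $n\in N$, the right ideal $I:=\Ann_r(n+Z(B))=\{r\in R\mid nr\in Z(B)\}$ is essential in $R$: if $0\ne s\in R$ and $ns\notin Z(B)$, then $ns\ne 0$ and, since $f(A)\le_e B$, there is $t$ with $0\ne nst\in f(A)\cap N\subseteq Z(B)$, giving $0\ne st\in I$; while if $ns\in Z(B)$ then already $s\in I$. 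Hence $n+Z(B)\in Z(B/Z(B))$, and since $R$ is right non-singular, $Z(-)$ is a radical so $Z(B/Z(B))=0$; thus $n\in Z(B)$, and $N=Z(B)$, proving (1).

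For (2), the key observation is that $\coker f=B/f(A)$ is a singular module, because $f(A)\le_e B$: for $b\notin f(A)$ and $0\ne s\in R$, either $bs\in f(A)$ or some $0\ne bst\in f(A)$, so $\Ann_r(b+f(A))\le_e R$. Consequently $(\coker f)/Z(\coker f)=0$, and since $(\coker f)_\mathfrak{G}$ is by construction an injective envelope of $(\coker f)/Z(\coker f)$, it is $0$. Applying the exact functor $(-)_\mathfrak{G}$ to the exact sequence $A\xrightarrow{\,f\,}B\longrightarrow\coker f\longrightarrow 0$ then yields the exact sequence $A_\mathfrak{G}\xrightarrow{f_\mathfrak{G}}B_\mathfrak{G}\longrightarrow(\coker f)_\mathfrak{G}=0$, so $\im f_\mathfrak{G}=B_\mathfrak{G}$, i.e.\ $f_\mathfrak{G}$ is surjective.

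The only step that is not purely formal is that passing to the quotient modulo the singular submodule preserves essentiality; this is exactly where right non-singularity of $R$ is used, and the argument isolates it in the identity $Z(B/Z(B))=0$. As a remark, once (2) is known one can also recover (1) from it: $A_\mathfrak{G}/\ker f_\mathfrak{G}\cong B_\mathfrak{G}$ is non-singular, the image of an essential submodule under a quotient map onto a non-singular module is again essential, and so $\bar f(A/Z(A))$ — the image of $A/Z(A)\le_e A_\mathfrak{G}$ — is essential in $B_\mathfrak{G}$, hence in the intermediate module $B/Z(B)$.
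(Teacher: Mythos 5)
Your proof is correct. Part (1) is in substance the same computation as the paper's: both reduce essentiality of $\bar f(A/Z(A))$ in $B/Z(B)$ to the identity $Z(B/Z(B))=0$, which is exactly where right non-singularity of $R$ enters. You organize it by showing that any submodule meeting the image trivially consists of elements annihilated modulo $Z(B)$ by an essential right ideal, whereas the paper argues element-by-element that each $b\notin Z(B)$ satisfies $0\ne (b+Z(B))I\subseteq\bar f(A/Z(A))$ for a suitable essential right ideal $I$; these are two phrasings of one argument. Part (2) is where you genuinely diverge. The paper deduces surjectivity from (1): $f_\mathfrak{G}(A_\mathfrak{G})$ contains $\bar f(A/Z(A))$, hence is essential in $B_\mathfrak{G}=E(B/Z(B))$, and being a non-singular image of an injective it is itself injective (Stenstr\"om, Prop.~7.4), so it must be all of $B_\mathfrak{G}$. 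You instead observe that $\coker f=B/f(A)$ is singular, hence $(\coker f)_\mathfrak{G}=E(0)=0$, and apply right-exactness of $(-)_\mathfrak{G}$ to $A\to B\to \coker f\to 0$. This is clean and makes (2) independent of (1), but note that it leans entirely on the exactness of $(-)_\mathfrak{G}$ asserted in the preamble (from Goodearl, Ch.~2); the standard proof of that right-exactness is precisely the ``essential non-singular image of an injective is injective'' argument that the paper uses directly, so you are not avoiding that ingredient, only relocating it into a quoted black box. Your closing remark that (1) can in turn be recovered from (2) is also correct.
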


\begin{proof}
$(1)$ Let $b \in B \setminus Z(B)$. By hypothesis, there exists $I\leq_e R_R$ such that $0\neq bI \subseteq f(A)$, and therefore $(b+Z(B))I\subseteq \bar{f}(A/Z(A))$. Moreover, $bI \nsubseteq Z(B)$. Indeed, if $bI \subseteq Z(B)$, then $b+Z(B) \in Z(B/Z(B))=0$, which contradicts the fact that $b \notin Z(B)$. It follows that $\bar{f}$ is $e$-epi.

$(2)$ By $(1)$, $\bar{f}(A/Z(A))$ is essential in $B_\mathfrak{G}=E(B/Z(B))$, therefore $f_\mathfrak{G}(A_\mathfrak{G})\leq_e  B_\mathfrak{G} $. Since $B_\mathfrak{G}$ is a non-singular module, then so is $f_\mathfrak{G}(A_\mathfrak{G})$. In particular, $f_\mathfrak{G}(A_\mathfrak{G})$ is injective by \cite[Proposition~7.4]{S}, hence $f_\mathfrak{G} (A_\mathfrak{G})=B_\mathfrak{G}$.
\end{proof}

\begin{corollary}\label{2.4}
Let $R$ be a right non-singular ring. If 
$
0\longrightarrow A_R\longrightarrow B_R \longrightarrow C_R \longrightarrow 0
$
is a short $e$-exact sequence, then the sequence
$
0\longrightarrow A_\mathfrak{G} \longrightarrow B_\mathfrak{G} \longrightarrow C_\mathfrak{G}\longrightarrow 0
$
is exact.
\end{corollary}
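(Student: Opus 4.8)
The plan is to apply the exact functor $(-)_\mathfrak{G}$ to a suitable factorization of the given complex, using two elementary facts. First, $(-)_\mathfrak{G}$ kills every singular module $M$, since $M_\mathfrak{G}=E(M/Z(M))=E(0)=0$ when $Z(M)=M$. Second, if $N\le_e M$ in $\Mod R$, then $M/N$ is singular: for $x\in M$ the right ideal $\{r\in R\mid xr\in N\}$ is the preimage of the essential submodule $N\cap xR$ of $xR$ under the map $R\to xR$, $r\mapsto xr$, hence it is essential in $R_R$, so $x+N\in Z(M/N)$ (see also \cite[Ch.~1]{goodearl}). Consequently, applying the exact functor $(-)_\mathfrak{G}$ to $0\to N\to M\to M/N\to 0$ and using $(M/N)_\mathfrak{G}=0$ shows that the inclusion $N\hookrightarrow M$ induces an isomorphism $N_\mathfrak{G}\xrightarrow{\sim}M_\mathfrak{G}$. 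This is precisely the property that makes $(-)_\mathfrak{G}$ invert essential monomorphisms, in analogy with the functor $P$ and Corollary~\ref{2.3}.

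Now let $0\to A_R\xrightarrow{f}B_R\xrightarrow{g}C_R\to 0$ be short $e$-exact and set $K=\ker g$; by Definition~\ref{1.1}, $f$ is monic, $f(A)\le_e K$, and $g$ is $e$-epi with $g(B)\le_e C$. Factor $f$ as $A\xrightarrow{\sim}f(A)\hookrightarrow K\hookrightarrow B$ and $g$ as $B\xrightarrow{g'}g(B)\hookrightarrow C$, where $g'$ is the corestriction of $g$. Since $(-)_\mathfrak{G}$ is exact, $f_\mathfrak{G}$ is a monomorphism; and $g_\mathfrak{G}$ is surjective by Lemma~\ref{S_functor}(2), because $g$ is $e$-epi. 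It remains to check exactness at $B_\mathfrak{G}$.

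For this, apply the exact functor $(-)_\mathfrak{G}$ to the genuine short exact sequence $0\to K\hookrightarrow B\xrightarrow{g'}g(B)\to 0$ to obtain the exact sequence $0\to K_\mathfrak{G}\to B_\mathfrak{G}\xrightarrow{g'_\mathfrak{G}}g(B)_\mathfrak{G}\to 0$, so that $\im(K_\mathfrak{G}\to B_\mathfrak{G})=\ker(g'_\mathfrak{G})$. By the first paragraph, the essential inclusions $f(A)\hookrightarrow K$ and $g(B)\hookrightarrow C$ induce isomorphisms after applying $(-)_\mathfrak{G}$, and so does $A\xrightarrow{\sim}f(A)$; chasing the two factorizations then gives $\im(f_\mathfrak{G})=\im(K_\mathfrak{G}\to B_\mathfrak{G})$ and $\ker(g_\mathfrak{G})=\ker(g'_\mathfrak{G})$. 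Combining these three equalities yields $\im(f_\mathfrak{G})=\ker(g_\mathfrak{G})$, which completes the proof. The only point requiring any real care is the fact, established in the first paragraph, that essential monomorphisms of $\Mod R$ become isomorphisms under $(-)_\mathfrak{G}$; once that is in hand, the rest is pure diagram bookkeeping with the exactness of $(-)_\mathfrak{G}$ and Lemma~\ref{S_functor}.
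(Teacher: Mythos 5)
Your proof is correct and follows essentially the same route as the paper's: exactness of $(-)_\mathfrak{G}$ handles injectivity and the identification of images with kernels, and Lemma~\ref{S_functor}(2) handles surjectivity. The only (harmless) variation is that where the paper would invoke Lemma~\ref{S_functor}(2) again for the essential inclusion $f(A)\hookrightarrow\ker g$, you instead observe directly that essential monomorphisms become isomorphisms because $(-)_\mathfrak{G}$ annihilates the singular quotient --- a correct and slightly more self-contained justification of the same step.
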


\begin{proof} Since $(-)_{\mathfrak{G}}$ is an exact functor, it preserves kernels and images. By Lemma~\ref{S_functor}(2), if $f\colon M \rightarrow N$ is an $e$-epi right $R$-module morphism, then $f_\mathfrak{G} \colon M_\mathfrak{G} \rightarrow N_\mathfrak{G}$ is surjective.
\end{proof}

\begin{Lemma}\label{indietro}
Let $R$ be a right non-singular ring and let $f\colon A \rightarrow B$ be a right $R$-module morphism, where $B$ is non-singular. If $f_\mathfrak{G} \colon A_\mathfrak{G} \rightarrow B_\mathfrak{G}$ is surjective, then $f$ is $e$-epi.
\end{Lemma}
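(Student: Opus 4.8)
The plan is to read off, inside $B$ itself, what surjectivity of $f_\mathfrak{G}$ says. Since $B$ is non-singular we have $Z(B)=0$, so $B/Z(B)=B$ and $B_\mathfrak{G}=E(B)$; consequently the induced map $\bar f\colon A/Z(A)\to B/Z(B)=B$ is just the factorization of $f$ through $A/Z(A)$, so its image is exactly $f(A)$, and $f_\mathfrak{G}\colon A_\mathfrak{G}\to B_\mathfrak{G}$ is the unique extension of $\bar f$ and in particular is $R$-linear. The key auxiliary observation I would record is that $A/Z(A)$ is essential in $A_\mathfrak{G}$ (the latter is an injective envelope of the former), hence for every $x\in A_\mathfrak{G}$ the right ideal $I_x\deq\{r\in R\mid xr\in A/Z(A)\}$ is essential in $R_R$: indeed, given $0\ne r_0\in R$, either $xr_0=0$, so $r_0\in I_x$, or else $xr_0R$ is a nonzero submodule of $A_\mathfrak{G}$ and meets $A/Z(A)$, producing $r_1$ with $0\ne xr_0r_1\in A/Z(A)$ and therefore $0\ne r_0r_1\in I_x\cap r_0R$.

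With this in hand the argument runs as follows. Fix $0\ne b\in B$; I want $f(A)\cap bR\ne 0$. By surjectivity of $f_\mathfrak{G}$ choose $x\in A_\mathfrak{G}$ with $f_\mathfrak{G}(x)=b$, and set $I=I_x\le_e R_R$. For each $r\in I$ we have $xr\in A/Z(A)$, whence on one hand $f_\mathfrak{G}(xr)=\bar f(xr)\in f(A)$, while on the other hand $f_\mathfrak{G}(xr)=f_\mathfrak{G}(x)r=br$; thus $bI\subseteq f(A)$. Since $I$ is essential in $R_R$ and $Z(B)=0$, we cannot have $bI=0$, so $bI$ is a nonzero submodule of $B$ contained in $bR\cap f(A)$. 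As every nonzero submodule of $B$ contains a nonzero cyclic one, this shows $f(A)\cap C\ne 0$ for every nonzero submodule $C\le B$, i.e.\ $f(A)\le_e B$, which is precisely the assertion that $f$ is $e$-epi.

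I do not anticipate any genuine obstacle: the proof is essentially the converse of Lemma~\ref{S_functor}, built on the same transporter-ideal manipulation. The only places that call for a little care are the identification of the restriction of $f_\mathfrak{G}$ to $A/Z(A)$ with $\bar f$ and of its image with $f(A)$ (this is exactly where the hypothesis that $B$ be non-singular is used), and the standard fact that an essential submodule has essential transporter ideals; both are routine, and neither requires more than the definitions already recalled in the excerpt.
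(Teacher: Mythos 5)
Your proof is correct and follows essentially the same route as the paper's own (direct) argument: pick a preimage $\alpha\in A_\mathfrak{G}$ of a nonzero $b\in B$, use an essential transporter ideal $I$ with $\alpha I\subseteq A/Z(A)$ to obtain $0\ne bI\subseteq f(A)$, and conclude from the non-singularity of $B$. The only difference is that you spell out the (standard) fact that the transporter ideal into the essential submodule $A/Z(A)\le_e A_\mathfrak{G}$ is essential, which the paper takes for granted.
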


\begin{proof}
Assume, by contradiction, $f_\mathfrak{G} \colon A_\mathfrak{G} \rightarrow B_\mathfrak{G}$ surjective, but $f$ not $e$-epi. Then there exists a nonzero submodule $C$ of $B$ such that $f(A)\cap C=0$. Now $f$ factorizes as $f=\varepsilon\circ f'$, where $f'=f|^{f(A)}\colon A\to f(A)$ is the corestriction of $f$ and $\varepsilon\colon f(A)\hookrightarrow B$ is the inclusion. It follows that the image $f(A)$ of $\varepsilon$ is not essential in $B$, so that $E(B)=E(f(A))\oplus C'$ for some non-zero module $C'$. Applying the functor $(-)_\mathfrak{G}$ to the factorization $f=\varepsilon\circ f'$, we get that $f_\mathfrak{G}=\varepsilon_\mathfrak{G} \circ f'_\mathfrak{G}$. Now $f_\mathfrak{G}$ epi implies $\varepsilon_\mathfrak{G}$ epi. But since $f(A)\le B$ are non-singular modules, $\varepsilon_\mathfrak{G}$ is the splitting embedding of $E(f(A))$ into $E(B)$, which is not epi because $C'\ne0$.
\\
By assumption, $B\leq_e B_\mathfrak{G}=E(B)$. Fix $b \in B \setminus \{0\}$. Then there exists $\alpha \in A_\mathfrak{G}$ such that $f_\mathfrak{G}(\alpha)=b$. Moreover, there exists $I\leq_e R_R$ such that $\alpha I\subseteq A/Z(A)$. Hence for every $i \in I$ there exists $a^{(i)} \in A$ such that $\alpha i = a^{(i)}$. Thus $\bar{f}(a^{(i)}+Z(A))=bi+Z(B)=bi$, and so $f(a^{(i)})=bi$. It follows that $bI \subseteq f(A)$ and $bI \neq 0$, because $B$ is non-singular. Therefore $f$ is $e$-epi.
\end{proof}

We are now in a position to prove Propositions~\ref{4lemma}, \ref{5lemma} and \ref{33lemma}. Since all these results can be proved using the same procedure, we only write the proof of Proposition~\ref{4lemma}(1) and we leave the other proofs to the reader. 

Applying the functor $(-)_{\mathfrak{G}}$ to the diagram
$$
\xymatrix{
A_1 \ar[r]^{f_1}\ar[d]^{t_1} &   A_2 \ar[r]^{f_2} \ar[d]^{t_2} & A_3 \ar[r]^{f_3} \ar[d]^{t_3} & A_4 \ar[d]^{t_4} \\
B_1 \ar[r]^{g_1} &   B_2 \ar[r]^{g_2} & B_3\ar[r]^{g_3} & B_4
}
$$
we get a commutative diagram of $R_{\mathfrak{G}}$-modules with exact rows. Since $t_1$ and $t_3$ are $e$-epi, $(t_1)_{\mathfrak{G}}$ and $(t_3)_{\mathfrak{G}}$ are surjective by Lemma~\ref{S_functor}. The classical 4-lemma ensures that $(t_2)_{\mathfrak{G}}$ is surjective, hence $t_2$ is $e$-epi by Lemma~\ref{indietro}. This proves Proposition~\ref{4lemma}(1). The other proofs are similar.

\bigskip

We now want to discuss the previous situation in the particular case in which $R$ is a right Ore domain. A domain $R$ is said to be {\em right Ore} if $aR\cap bR\neq 0$ for every pair of nonzero elements $a,b \in R$, that is, if every nonzero (principal) right ideal of $R$ is essential. Given a right Ore domain $R$, it is possible to construct its classical right ring of fractions $Q:=Q_{cl}^r(R)$ (see \cite[Chapter~4]{Lam}). We have that $Q$ is a division ring which turns out to be a flat left $R$-module. Every element of $Q$ is of the form $rs^{-1}$ for suitable elements $r,s \in R$ and $s$ nonzero. Moverover, $R$ can be embedded into $Q$ via the ring homomorphism defined by $r \mapsto r1^{-1}$. For any right $R$-module $M_R$, we can consider the ``right localization'' $M(R\setminus\{0\})^{-1}\cong M\otimes_R Q$, which is a right $Q$-module with elements of the form $ms^{-1}$, for $m \in M$ and $s \in R\setminus\{0\}$. There is a natural map $M\rightarrow M(R\setminus\{0\})^{-1}$ whose kernel is the torsion submodule of $M$, $t(M):=\{m \in M \mid mr=0 \mbox{ for some nonzero }r \in R\}= Z(M_R)$. Since in a right Ore domain all nonzero right ideals are essential, $Q\cong R_{\mathfrak{G}}$ as rings and the functor $-\otimes_R Q$ is naturally isomorphic to $(-)_{\mathfrak{G}}$. Hence we have the following result.

\begin{Lemma}\label{lemma-Ore}
Let $R$ be a right Ore domain with classical right ring of fractions $Q$ and let $f\colon A_R\rightarrow B_R$ be a right $R$-module morphism.
\begin{enumerate}
\item
If $f$ is $e$-epi, then $A\otimes_R Q\stackrel{f\otimes 1}{\longrightarrow} B\otimes_R Q$ is surjective.
\item
If $A\otimes_R Q\stackrel{f\otimes 1}{\longrightarrow} B\otimes_R Q$ is surjective and $B$ is non-singular, then $f$ is $e$-epi.
\end{enumerate}
 In particular, if
$
0\to A_R\to B_R \to C_R \to 0
$
is a short $e$-exact sequence, then the sequence $0\to A_R\otimes_R Q \to B_R\otimes_R Q \to C_R \otimes_R Q\to 0
$ is exact.
\end{Lemma}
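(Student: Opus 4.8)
The plan is to deduce the statement from the results already proved for the functor $(-)_\mathfrak{G}$, exploiting the natural isomorphism $-\otimes_R Q\cong(-)_\mathfrak{G}$ recorded in the paragraph preceding the lemma. First I would observe that a right Ore domain $R$ is right non-singular: for every nonzero $x\in R$ one has $\Ann_r(x)=0$, which is not an essential right ideal, so $Z_r(R)=0$. Hence Lemma~\ref{S_functor}, Lemma~\ref{indietro} and Corollary~\ref{2.4} all apply to $R$. Moreover, since in $R$ every nonzero right ideal is essential, the Goldie topology $\mathfrak{G}$ consists exactly of the nonzero right ideals, $R_\mathfrak{G}\cong Q$ as rings, and $M_\mathfrak{G}\cong M\otimes_R Q$ naturally in $M$.

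Granting this, part (1) is immediate: if $f$ is $e$-epi then $f_\mathfrak{G}$ is surjective by Lemma~\ref{S_functor}(2), and transporting along the isomorphism $(-)_\mathfrak{G}\cong-\otimes_R Q$ shows that $f\otimes 1$ is surjective. For part (2), if $f\otimes 1$ is surjective then so is $f_\mathfrak{G}$, and since $B$ is non-singular, Lemma~\ref{indietro} gives that $f$ is $e$-epi.

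For the final ``in particular'', the quickest route is to note that $R$ is right non-singular, so Corollary~\ref{2.4} applies verbatim to the short $e$-exact sequence $0\to A\to B\to C\to 0$ and yields the exact sequence $0\to A_\mathfrak{G}\to B_\mathfrak{G}\to C_\mathfrak{G}\to 0$; rewriting $(-)_\mathfrak{G}$ as $-\otimes_R Q$ gives the claim. Alternatively, one can argue directly: $Q$ is a flat left $R$-module, so $-\otimes_R Q$ is exact, whence $f\otimes 1$ is injective; $g$ is $e$-epi, so $g\otimes 1$ is surjective by (1); and since the inclusion $f(A)\hookrightarrow\ker g$ is $e$-epi (as $f(A)\le_e\ker g$), part (1) applied to it together with flatness identifies the image of $f\otimes 1$ with $\ker g\otimes_R Q=\ker(g\otimes 1)$, giving exactness in the middle.

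I expect no real obstacle here; the only point that deserves a word is the naturality of the isomorphism $-\otimes_R Q\cong(-)_\mathfrak{G}$ on morphisms — that is, checking that the unique extension $f_\mathfrak{G}$ of $\bar f$ agrees with $f\otimes 1$ under these identifications — but this follows from the universal property of localization at $R\setminus\{0\}$ and is, in essence, already asserted in the text preceding the statement. So the lemma is genuinely a corollary of the machinery built in this section.
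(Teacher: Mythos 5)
Your proposal is correct and follows exactly the route the paper intends: the lemma is stated as an immediate consequence of the natural isomorphism $-\otimes_R Q\cong(-)_\mathfrak{G}$ established in the preceding paragraph, combined with Lemma~\ref{S_functor}, Lemma~\ref{indietro} and Corollary~\ref{2.4}. The paper gives no separate proof beyond ``Hence we have the following result,'' and your write-up simply makes that derivation explicit (including the correct observation that a right Ore domain is right non-singular).
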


Corollaries \ref{2.3} and \ref{2.4} show that the notion of short $e$-exact sequence is intermediate between that of becoming exact applying the functor $P$ and that of becoming exact applying the functor $(-)_\mathfrak{G}$. The next result shows that there are no additive functors such that the notion of short $e$-exact sequence coincides with that of becoming exact applying $F$.

\begin{theorem}\label{no_functors}
Let $R$ be a ring that is not artinian semisimple. Then there do not exist additive functors $F\colon \Mod R\to\Cal A$ into any abelian category $\Cal A$ with the property that a cochain complex $A_R \stackrel{f}{\longrightarrow} B_R \stackrel{g}{\longrightarrow} C_R$ is $e$-exact if and only if the cochain complex $F(A_R) \stackrel{F(f)}{\longrightarrow} F(B_R) \stackrel{F(g)}{\longrightarrow} F(C_R)$ is exact.
\end{theorem}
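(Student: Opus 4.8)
The plan is to argue by contradiction: suppose such an additive functor $F\colon\Mod R\to\Cal A$ exists. The first observation is that $F$ must "detect" $e$-exactness in a very rigid way on short complexes, and in particular on the building blocks $0\to M\to M\to 0$ and $M\to M\to 0$ and $0\to M\to M$. Applying the criterion to the complex $0\to M_R\stackrel{\mathrm{id}}{\to}M_R\to 0$ (which is $e$-exact) and to $0\to M_R\stackrel{0}{\to}M_R\to 0$ and similar trivial complexes, I would extract structural constraints on $F$. The key point is that a submodule inclusion $N\hookrightarrow M$ with $N\le_e M$ gives an $e$-exact complex $0\to N\to M\to M/N$, while $0\to N\to M\to 0$ (without the quotient) is $e$-exact iff $N\le_e M$; feeding both into $F$ shows that $F$ sends essential monomorphisms to morphisms that are monic with, in the appropriate sense, "essential" or even zero cokernel — and more precisely, since $\Cal A$ is abelian and exactness there is an honest condition, $F$ of an essential mono must be an isomorphism. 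Thus $F$ inverts all essential monomorphisms, so $F$ factors through the spectral category $\Spec(\Mod R)$ via the universal functor $P$, i.e. $F\cong G\circ P$ for some additive $G\colon\Spec(\Mod R)\to\Cal A$.

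Next I would exploit the other direction of the biconditional together with Corollary~\ref{2.3}: a complex $A\to B\to C$ whose image under $P$ is exact in $\Spec(\Mod R)$ need not be $e$-exact in general, but conversely there are $e$-exact complexes whose image under $P$ is \emph{not} exact — wait, Corollary~\ref{2.3} goes the other way, so I must be careful. The genuinely useful asymmetry is this: there exist complexes that are $e$-exact but become non-exact after applying $(-)_\mathfrak{G}$ is false by Corollary~\ref{2.4}; instead the relevant gap is between $P$-exactness and $e$-exactness. So the contradiction should come from the following: since $F=G\circ P$ and exactness of $F(A)\to F(B)\to F(C)$ is equivalent to $e$-exactness, while $P(A)\to P(B)\to P(C)$ exact implies (but is not implied by) $e$-exactness, I would produce, whenever $R$ is not artinian semisimple, a short complex that is $e$-exact but whose image under $P$ is \emph{not} exact, and derive a contradiction by showing $G$ would then have to turn a non-exact sequence in $\Spec(\Mod R)$ into an exact one — which is impossible for an additive functor between abelian categories applied to a split-testing sequence, after checking $\Spec(\Mod R)$ itself is abelian (indeed semisimple) so "exact" there is meaningful and $G$ can be assumed exact-detecting on the relevant piece.

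Concretely, for the source of the non-artinian-semisimple hypothesis: if $R$ is not artinian semisimple, then $\Mod R$ has a non-semisimple object, hence there is a module $M$ with a proper essential submodule, or equivalently an indecomposable module $E$ (e.g. an indecomposable injective, or a cyclic module $R/I$) possessing a short $e$-exact sequence $0\to A\to E\to E/A\to 0$ with $A\le_e E$ proper and $E/A\ne0$ singular-or-not but crucially with $A$ \emph{not} a direct summand. The complex $0\to A\to E\to 0$ is then $e$-exact (as $A\le_e E$), so $F(A)\to F(E)\to 0$ is exact, i.e. $F(A)\to F(E)$ is epi; but the complex $0\to A\to E\to E/A$ being $e$-exact forces, via $F$ additive and left exactness, $F(E/A)$ to receive an epi cokernel of $F(A)\to F(E)$, which is $0$ — so $F(E/A)=0$, hence $F$ kills $E/A$. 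Iterating / combining with a non-$e$-exact complex built from the same data (such as $0\to A\to E\xrightarrow{\pi} E/A$ composed into $A\xrightarrow{0}E/A\xrightarrow{\mathrm{id}}E/A$, which is \emph{not} $e$-exact because $0$ is not essential in $\ker(\mathrm{id})=0$ — hmm that one is actually $e$-exact) I would instead use: $E/A\xrightarrow{\mathrm{id}}E/A\xrightarrow{\mathrm{id}}E/A$ is \emph{not} $e$-exact (image is everything, kernel is $0$), yet applying $F$ and using $F(E/A)=0$ gives the exact complex $0\to0\to0$, contradiction. I expect the main obstacle to be organizing the trivial-complex bookkeeping so that it cleanly yields both "$F$ inverts essential monos" and "$F$ kills some nonzero module", and locating, for an arbitrary non-artinian-semisimple $R$, a \emph{single} module $E/A$ that $F$ is forced to annihilate while the identity complex on it witnesses non-$e$-exactness; the ring-theoretic input that non-artinian-semisimple rings admit a module with a proper essential submodule is standard, but pinning down which complexes are $e$-exact among the various trivial ones requires care with the convention that $0\le_e 0$.
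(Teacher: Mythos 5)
Your overall strategy is the paper's: show that the biconditional forces $F$ to invert essential monomorphisms, deduce that $F$ must annihilate some nonzero module, and then exhibit a cochain complex that is not $e$-exact but is sent by $F$ to an exact sequence. The derivation that $F(E/A)=0$ whenever $A$ is a proper essential submodule of $E$ is sound once tidied up (apply the biconditional to $0\to A\to E$, to $A\to E\to 0$, and to the exact, hence $e$-exact, complexes $A\to E\to E/A$ and $E\to E/A\to 0$), and the existence of such a pair $A\le_e E$ with $E/A\ne 0$ for $R$ not artinian semisimple is exactly the ring-theoretic input the paper uses (a maximal right ideal that is not a direct summand is proper and essential in $R_R$; the paper phrases this via the singular simple module $S=R/I$).

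However, your final step contains a genuine error: the sequence $E/A\stackrel{\mathrm{id}}{\longrightarrow}E/A\stackrel{\mathrm{id}}{\longrightarrow}E/A$ is \emph{not a cochain complex}, since the composite of the two consecutive maps is $\mathrm{id}\ne 0$ when $E/A\ne 0$. The hypothesis on $F$ only concerns cochain complexes $A_R\to B_R\to C_R$ (with $g\circ f=0$), so no conclusion about this sequence can be drawn from the assumed biconditional, and the contradiction evaporates. The repair is easy and you have all the ingredients: take instead the cochain complex $0\longrightarrow E/A\longrightarrow 0$, which is not $e$-exact because the image $0$ of the first map is not essential in $\ker(E/A\to 0)=E/A\ne 0$, yet $F$ sends it to $0\to 0\to 0$, which is exact; or, as the paper does, take the split inclusion $S\to S\oplus S\to 0$ into the first summand of a module $S$ with $F(S)=0$, which is a cochain complex, is not $e$-exact since $S\oplus 0$ is not essential in $S\oplus S$, and becomes $0\to 0\to 0$ under the additive functor $F$. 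Separately, the middle portion of your write-up (factoring $F$ through $\Spec(\Mod R)$ and the appeal to Corollaries~\ref{2.3} and \ref{2.4}) is never used in your actual argument and should be deleted; the claim that $F$ inverts essential monomorphisms needs only the trivial complexes $0\to A\to E$ and $A\to E\to 0$, not the universal property of the spectral category.
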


\begin{proof}
Suppose that such a category $\Cal A$ and functor $F\colon \Mod R\to\Cal A$ exist. First of all, notice that the functor $F$ must be exact, because every exact sequence of $R$-modules is $e$-exact. Also, notice that if $f\colon A_R\to B_R$ is any essential monomorphism, then the cochain complex $0\longrightarrow A_R \stackrel{f}{\longrightarrow} B_R \longrightarrow 0$ is $e$-exact, so that the cochain complex $0\longrightarrow F(A_R) \stackrel{F(f)}{\longrightarrow} F(B_R) \longrightarrow 0$ is exact ($F$ is additive, hence maps zero objects to zero objects and zero morphisms to zero morphisms). It follows that $F$ maps essential monomorphisms to isomorphisms. Now for any singular module $C_R$, there exists an exact sequence $0\longrightarrow A_R \stackrel{f}{\longrightarrow} B_R \stackrel{g}{\longrightarrow} C_R \longrightarrow 0$ with $f$ an essential monomorphism \cite[Proposition 1.20(b)]{goodearl}. Applying the exact functor $F$, we get an exact sequence $0\longrightarrow F(A_R) \stackrel{F(f)}{\longrightarrow} F(B_R) \stackrel{F(g)}{\longrightarrow} F(C_R) \longrightarrow 0$ with $F(f)$ an isomorphism, so $F(C_R)=0$. This proves that $F$ necessarily annihilates all singular right $R$-modules. Now $R$ is not artinian semisimple, so it has a maximal right ideal that is not a direct summand \cite[Lemma 3.16]{LIBRO!}, hence there is a simple right $R$-module $S_R$ that is not projective. By \cite[Proposition~1.24]{goodearl}, $S_R$ is singular. Now consider the embedding $f\colon S_R\to S_R\oplus S_R$ into the first direct summand. The cochain complex $S_R\stackrel{f}{\longrightarrow} S_R\oplus S_R \longrightarrow 0$ is not $e$-exact, but, if we apply to it the functor $F$, we get the exact sequence $F(S_R)\stackrel{F(f)}{\longrightarrow} F(S_R\oplus S_R) \longrightarrow 0$. This contradicts the hypothesis on $F$ in the statement of the Theorem.
\end{proof}

\section{Gabriel topologies}\label{Section_Gabriel}

The results we have seen in Section~\ref{section_singular} for the functor $(-)_{\mathfrak{G}}$ can be extended to a much more general setting. Let $\mathfrak{F}$ be any right Gabriel topology on a ring $R$ \cite[Chapter VI \S 5]{S}. Thus $\mathfrak{F}$ corresponds to a hereditary torsion theory in $\Mod R$. Suppose this hereditary torsion theory {\em stable}, that is, the class of $\mathfrak{F}$-torsion modules is closed under injective envelopes \cite[Chapter VI \S 7]{S} and set for every right $R$-module $M_R$, $M_\mathfrak{F}:=\varinjlim \Hom_R(I, M_R)$, where the direct limit is taken over the downwards directed family of ideals $I \in\mathfrak{F}$. The assignment $M\mapsto M_\mathfrak{F}$ defines a left exact functor $\Mod R\to \Mod R_\mathfrak{F}$ (see \cite[Chapter~IX \S 1]{S}).
We say that a cochain complex 
  $$\ldots\longrightarrow M^{i-1}\stackrel{f^{i-1}}{\longrightarrow}M^i\stackrel{f^i}{\longrightarrow}M^{i+1}\stackrel{f^{i+1}}{\longrightarrow}\ldots$$ of right $R$-modules is  {\em 
  $\mathfrak{F}$-exact} if the cohomology modules $\ker(f^i)/ f^{i-1}(M^{i-1})$ are $\mathfrak{F}$-torsion $R$-modules for every $i$. That is, if $(f^{i-1}(M^{i-1}):x)\in\mathfrak{F}$ for every $i$ and every $x\in \ker(f^i)$.

\begin{Lemma}\label{1601} Let $\mathfrak{F}$ be a perfect stable Gabriel topology on $R$ and let $f\colon A_R\to B_R$ be a right $R$-module morphism.

{\rm (a)} If $B/f(A)$ is $\mathfrak{F}$-torsion, then the induced morphism $\widetilde{f}\colon A_{\mathfrak{F}}\to B_{\mathfrak{F}}$ is surjective.

{\rm (b)} Conversely, if the morphism $\widetilde{f}\colon A_{\mathfrak{F}}\to B_{\mathfrak{F}}$ is surjective and $B_R$ is $\mathfrak{F}$-torsionfree, then $B/f(A)$ is $\mathfrak{F}$-torsion.\end{Lemma}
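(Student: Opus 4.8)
The plan is to mimic the proofs of Lemma~\ref{S_functor}(2) and Lemma~\ref{indietro}, but now using the machinery of a perfect Gabriel topology instead of the singular one. Recall that $\mathfrak{F}$ being \emph{perfect} means that the localization functor $(-)_\mathfrak{F}$ is exact and that $M_\mathfrak{F}\cong M\otimes_R R_\mathfrak{F}$ naturally; it also means that a right $R_\mathfrak{F}$-module is the same thing as an $\mathfrak{F}$-torsionfree, $\mathfrak{F}$-divisible (i.e.\ $\mathfrak{F}$-closed) right $R$-module, and the canonical map $M\to M_\mathfrak{F}$ has $\mathfrak{F}$-torsion kernel and cokernel. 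Stability of the torsion theory guarantees in addition that injective envelopes of $\mathfrak{F}$-torsionfree modules are again $\mathfrak{F}$-torsionfree, which is what lets one run the ``going back'' argument as in Lemma~\ref{indietro}.

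For part (a), I would start from the short exact sequence $0\to f(A)\to B\to B/f(A)\to 0$ together with $0\to \ker f\to A\to f(A)\to 0$. Applying the exact functor $(-)_\mathfrak{F}$ to the first sequence and using that $(B/f(A))_\mathfrak{F}=0$ since $B/f(A)$ is $\mathfrak{F}$-torsion, one gets $f(A)_\mathfrak{F}\xrightarrow{\ \sim\ }B_\mathfrak{F}$. Applying $(-)_\mathfrak{F}$ to the second sequence shows $A_\mathfrak{F}\to f(A)_\mathfrak{F}$ is surjective. Composing, $\widetilde f\colon A_\mathfrak{F}\to B_\mathfrak{F}$ is surjective. (One should check that the composite really is the morphism $\widetilde f$ induced by $f$, but this is just functoriality of $(-)_\mathfrak{F}$ applied to the factorization $A\twoheadrightarrow f(A)\hookrightarrow B$.)

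For part (b), suppose $\widetilde f$ is surjective and $B$ is $\mathfrak{F}$-torsionfree. As in Lemma~\ref{indietro}, factor $f=\varepsilon\circ f'$ with $f'\colon A\twoheadrightarrow f(A)$ the corestriction and $\varepsilon\colon f(A)\hookrightarrow B$ the inclusion; then $\widetilde f=\widetilde\varepsilon\circ\widetilde{f'}$, so $\widetilde\varepsilon$ is surjective. Now $f(A)$ and $B$ are both $\mathfrak{F}$-torsionfree (submodules of torsionfree modules are torsionfree), so $f(A)_\mathfrak{F}$ and $B_\mathfrak{F}$ are their respective $\mathfrak{F}$-injective envelopes inside $E(f(A))$, $E(B)$; here is where stability is used, to know these envelopes stay $\mathfrak{F}$-torsionfree, hence agree with the localizations. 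If $f(A)$ were not essential in $B$, write $E(B)=E(f(A))\oplus C'$ with $C'\ne 0$; then $\widetilde\varepsilon$ is a split inclusion with nonzero cokernel $C'_\mathfrak{F}\ne 0$ (nonzero because $C'$ embeds in the torsionfree module $E(B)$, hence is torsionfree and nonzero, hence has nonzero localization), contradicting surjectivity of $\widetilde\varepsilon$. Therefore $f(A)\le_e B$. Finally, to upgrade ``$f(A)$ essential in $B$'' to ``$B/f(A)$ is $\mathfrak{F}$-torsion'': for $b\in B$, surjectivity of $\widetilde f$ gives $\beta\in A_\mathfrak{F}$ with $\widetilde f(\beta)=b$ in $B_\mathfrak{F}$, and since $A\to A_\mathfrak{F}$ has $\mathfrak{F}$-torsion cokernel there is $I\in\mathfrak{F}$ with $\beta I\subseteq \operatorname{im}(A\to A_\mathfrak{F})$; chasing as in Lemma~\ref{indietro} one finds $bI\subseteq f(A)$ modulo the $\mathfrak{F}$-torsion submodule of $B$, which is $0$ since $B$ is torsionfree, so $bI\subseteq f(A)$, i.e.\ $(f(A):b)\supseteq I\in\mathfrak{F}$, whence $b+f(A)$ is $\mathfrak{F}$-torsion. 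As $b$ was arbitrary, $B/f(A)$ is $\mathfrak{F}$-torsion.

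The main obstacle I anticipate is purely bookkeeping with the definition of the localization as the direct limit $\varinjlim\Hom_R(I,M)$: one needs to know precisely that perfectness makes this a flat localization with $M_\mathfrak{F}\cong M\otimes_R R_\mathfrak{F}$, that the canonical map has torsion kernel and cokernel, and — for the $\mathfrak{F}$-torsionfree case — that $M_\mathfrak{F}=E_\mathfrak{F}(M)$ coincides with $E(M)$ when the ambient torsion theory is stable. Once those standard facts from \cite[Chapters VI, IX]{S} and \cite[Chapter XI]{S} are in hand, both arguments are a routine transcription of Lemmas~\ref{S_functor} and \ref{indietro}, with ``singular/non-singular'' replaced by ``$\mathfrak{F}$-torsion/$\mathfrak{F}$-torsionfree'' and $\mathfrak{G}$ replaced by $\mathfrak{F}$.
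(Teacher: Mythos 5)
Your proposal is correct and follows essentially the same route as the paper: part (a) by applying the exact localization functor (perfectness giving $(-)_{\mathfrak{F}}\cong -\otimes_R R_{\mathfrak{F}}$ with $R_{\mathfrak{F}}$ flat) to kill $(B/f(A))_{\mathfrak{F}}$, and part (b) by the element chase using that every element of $A_{\mathfrak{F}}$ is defined on some $I\in\mathfrak{F}$ and that $\psi_B\colon B\to B_{\mathfrak{F}}$ is injective when $B$ is $\mathfrak{F}$-torsionfree. The intermediate step in (b) establishing $f(A)\leq_e B$ is redundant (your closing chase never uses it, and the identification of $\operatorname{coker}\widetilde{\varepsilon}$ with $C'_{\mathfrak{F}}$ there is slightly imprecise), but since the final paragraph alone reproduces the paper's argument, this does not affect correctness.
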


\begin{proof} (a) Assume $B/f(A)$ is $\mathfrak{F}$-torsion. We have an exact sequence $A_R \stackrel{f}{\longrightarrow} B_R\to B/f(A)\to 0$. Since $\mathfrak{F}$ is perfect stable, the localization functor $(-)_{\mathfrak{F}}$ and the tensor product functor $-\otimes_RR_{\mathfrak{F}}$ are naturally isomorphic \cite[Proposition~XI.3.4(e)]{S} and $_RR_{\mathfrak{F}}$ is a flat left $R$-module \cite[Proposition~XI.3.11(b)]{S}. It follows that the induced sequence $A_{\mathfrak{F}} \to B_{\mathfrak{F}}\to (B/f(A)_{\mathfrak{F}}\to 0$ is exact. But $(B/f(A)_{\mathfrak{F}})= 0$ because $B/f(A)$ is $\mathfrak{F}$-torsion.

(b) Suppose $\widetilde{f}\colon A_{\mathfrak{F}}\to B_{\mathfrak{F}}$ surjective and $B_R$ $\mathfrak{F}$-torsionfree. There is a commutative square 
$$
\xymatrix{
A\ar[r]^{f}\ar[d]_{\psi_A} & B  \ar[d]^{\psi_B} & \\
A_{\mathfrak{F}} \ar[r]_{\widetilde{f}} &   B_{\mathfrak{F}} \ar[r]& 0.
}
$$ In order to prove that $B/f(A)$ is $\mathfrak{F}$-torsion, we must show that for every element $b\in B$ there exists $I\in\mathfrak{F}$ such that $bI\subseteq f(A)$. Now $\psi_B(b)\in B_{\mathfrak{F}}$ and $\widetilde{f}$ is onto, so that there exists an element $\widetilde{a}\in A_{\mathfrak{F}}$ with $\widetilde{f}(\widetilde{a})=\psi_B(b)$. Since $A_{\mathfrak{F}}=\displaystyle \lim_{\longrightarrow}\Hom(I,A_R)$ by \cite[Proposition~IX.1.7]{S}, the element $\widetilde{a}\in A_{\mathfrak{F}}$ is represented by a morphism $g\colon I'\to A_R$ in $\Hom(I',A_R)$ for some right ideal $I'\in\mathfrak{F}$. But $\widetilde{f}(\widetilde{a})=\psi_B(b)$ in $B_{\mathfrak{F}}$, so there exists a right ideal $I\subseteq I'$, $I\in\mathfrak{F}$, such that the morphisms $\rho_b\colon R\to B$ (right multiplication by $b$) and $fg\colon I'\to B$ have the same restriction to $I'$. Then, for every $i\in I$, we get that $bi=fg(i)$. Therefore $bI\subseteq f(A_R)$, as desired.
\end{proof}

\begin{Lemma}
Let $\mathfrak{F}$ be a perfect stable Gabriel topology on a ring $R$. If the cochain complex $A_R \stackrel{f}{\longrightarrow} B_R \stackrel{g}{\longrightarrow} C_R 
$ is $\mathfrak{F}$-exact, then the sequence $A_{\mathfrak{F}} \stackrel{f}{\longrightarrow} B_{\mathfrak{F}}  \stackrel{g}{\longrightarrow} C_{\mathfrak{F}}  
$ of right $R_{\mathfrak{F}} $-modules is exact.
\end{Lemma}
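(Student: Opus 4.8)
The plan is to reduce the statement to short exact sequences and then invoke exactness of the localization functor. Since $\mathfrak{F}$ is perfect stable, the functor $(-)_{\mathfrak{F}}\colon\Mod R\to\Mod R_{\mathfrak{F}}$ is naturally isomorphic to $-\otimes_RR_{\mathfrak{F}}$ with $_RR_{\mathfrak{F}}$ flat (exactly the facts already used in the proof of Lemma~\ref{1601}); hence $(-)_{\mathfrak{F}}$ is exact, it preserves kernels, images and inclusions of submodules, and it annihilates every $\mathfrak{F}$-torsion module. Writing $\widetilde f,\widetilde g$ for the induced $R_{\mathfrak{F}}$-linear maps (which the statement denotes again by $f,g$), we have $\widetilde g\,\widetilde f=\widetilde{gf}=0$, so the localized sequence is a complex, and exactness at $B_{\mathfrak{F}}$ amounts to the equality $\ker(\widetilde g)=\im(\widetilde f)$, which is what I must establish.

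Next I would factor the complex through the relevant sub- and quotient modules of $B_R$. Put $K=\ker(g)$, $I=f(A)=\im(f)$ (so $I\subseteq K$ since we have a complex), and $H=K/I$; by $\mathfrak{F}$-exactness $H$ is an $\mathfrak{F}$-torsion module. This yields the short exact sequences
$$0\longrightarrow\ker(f)\longrightarrow A\longrightarrow I\longrightarrow 0,\qquad 0\longrightarrow I\longrightarrow K\longrightarrow H\longrightarrow 0,\qquad 0\longrightarrow K\longrightarrow B\longrightarrow B/K\longrightarrow 0,$$
through which $f$ factors as $A\twoheadrightarrow I\hookrightarrow K\hookrightarrow B$ and $g$ factors as $B\twoheadrightarrow B/K\xrightarrow{\,\sim\,}\im(g)\hookrightarrow C$.

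Then I would apply $(-)_{\mathfrak{F}}$ throughout. Localizing the second sequence and using $H_{\mathfrak{F}}=0$ shows $I_{\mathfrak{F}}\to K_{\mathfrak{F}}$ is an isomorphism; localizing the first shows $A_{\mathfrak{F}}\to I_{\mathfrak{F}}$ is surjective; and since localization turns the inclusions $I\subseteq K\subseteq B$ into inclusions of $R_{\mathfrak{F}}$-submodules, it follows that $\im(\widetilde f)$ is precisely the image of $K_{\mathfrak{F}}$ in $B_{\mathfrak{F}}$. On the other hand, localizing the third sequence gives $0\to K_{\mathfrak{F}}\to B_{\mathfrak{F}}\to(B/K)_{\mathfrak{F}}\to 0$, so the image of $K_{\mathfrak{F}}$ in $B_{\mathfrak{F}}$ equals $\ker\bigl(B_{\mathfrak{F}}\to(B/K)_{\mathfrak{F}}\bigr)$; and because $(B/K)_{\mathfrak{F}}\cong\im(g)_{\mathfrak{F}}$ embeds in $C_{\mathfrak{F}}$ (localization preserves the monomorphism $\im(g)\hookrightarrow C$), this kernel coincides with $\ker(\widetilde g)$. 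Combining the two identifications gives $\im(\widetilde f)=\ker(\widetilde g)$, as required.

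I do not expect a genuine obstacle: the argument is a short diagram chase whose only engine is the exactness of $(-)_{\mathfrak{F}}$ together with the vanishing of $\mathfrak{F}$-torsion modules after localization. The one point deserving care is the bookkeeping that localization commutes with the formation of kernels and images and preserves submodule inclusions — all immediate from exactness — and the observation that it is the \emph{perfectness} of $\mathfrak{F}$ (beyond it being merely a Gabriel topology) that makes $(-)_{\mathfrak{F}}$ exact and kills the torsion, so the hypothesis on $\mathfrak{F}$ is genuinely used.
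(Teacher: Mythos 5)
Your proof is correct and follows essentially the same route as the paper: factor $f$ through $\ker g$, use that $(-)_{\mathfrak{F}}\cong-\otimes_RR_{\mathfrak{F}}$ is exact and kills $\mathfrak{F}$-torsion modules to get surjectivity of $A_{\mathfrak{F}}\to(\ker g)_{\mathfrak{F}}$, and identify $(\ker g)_{\mathfrak{F}}$ with $\ker(\widetilde g)$. The only cosmetic difference is that you re-derive inline (via the two short exact sequences through $I=f(A)$) what the paper obtains by citing Lemma~\ref{1601}(a).
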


\begin{proof}
We have that $f=\varepsilon\circ f'$, where $\varepsilon\colon\ker g\to B$ is the inclusion and $f'\colon A_R\to\ker g$ is the corestriction of $f$ to $\ker g$ (which contains $f(A_R)$). Then $f'\colon A_R\to\ker g$ has an $\mathfrak{F}$-torsion cokernel, so that the induced mapping $\widetilde{f'}\colon A_{\mathfrak{F}}\to (\ker g)_{\mathfrak{F}}$ is onto by Lemma~\ref{1601}(a). Applying the exact functor $-\otimes R_{\mathfrak{F}}$ to the exact sequence $0\to \ker g \stackrel{\varepsilon}{\longrightarrow} B_R \stackrel{g}{\longrightarrow} C_R $, we get the exact sequence $0\to (\ker g)_{\mathfrak{F}} \stackrel{\widetilde{\varepsilon}}{\longrightarrow} B_{\mathfrak{F}} \stackrel{\widetilde{g}}{\longrightarrow} C_{\mathfrak{F}} $. Therefore $\ker(\widetilde{g})=\widetilde{\varepsilon}((\ker g)_{\mathfrak{F}} )=\widetilde{\varepsilon}(\widetilde{f'}(A_\mathfrak{F}))=\widetilde{f}(A_\mathfrak{F})$, as desired.
\end{proof}

As one might expect, it is also possible to prove suitable versions of the 4-lemma, 5-lemma and $3\times 3$-lemma in this setting. Here, we present the 4-lemma with $\mathfrak{F}$-exact sequences. The other statements (and their proofs) are the obvious ones.

\begin{prop}{\rm (4-lemma with $\mathfrak{F}$-exact sequences)}
Let $\mathfrak{F}$ be a perfect stable Gabriel topology on a ring~$R$, and consider the following commutative diagram of right  $R$-module morphisms with $\mathfrak{F}$-exact rows.
$$
\xymatrix{
A_1 \ar[r]^{f_1}\ar[d]^{t_1} &   A_2 \ar[r]^{f_2} \ar[d]^{t_2} & A_3 \ar[r]^{f_3} \ar[d]^{t_3} & A_4 \ar[d]^{t_4} \\
B_1 \ar[r]^{g_1} &   B_2 \ar[r]^{g_2} & B_3\ar[r]^{g_3} & B_4.
}
$$
\begin{enumerate}
\item[{\rm (1)}]
If $t_1,t_3$ are $\mathfrak{F}$-epis, $t_4$ is monic and $B_2$ is $\mathfrak{F}$-torsionfree then $t_2$ is $\mathfrak{F}$-epi.
\item[{\rm (2)}]
If $t_1$ is $\mathfrak{F}$--epi and $t_2,t_4$ are monic, then $\ker(t_3)$ is an $\mathfrak{F}$-torsion $R$-module. In particular, if $A_3$ is a $\mathfrak{F}$-torsionfree $R$-module, then $t_3$ is monic.
\end{enumerate}
\end{prop}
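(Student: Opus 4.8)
The plan is to reproduce verbatim the scheme used to prove Proposition~\ref{4lemma}, with the localization functor $(-)_{\mathfrak F}$ in place of $(-)_{\mathfrak G}$ and ``$\mathfrak F$-torsion'' resp.\ ``$\mathfrak F$-torsionfree'' in place of ``singular'' resp.\ ``non-singular''. First I would apply $(-)_{\mathfrak F}$ to the given commutative diagram. Since $\mathfrak F$ is perfect stable, $(-)_{\mathfrak F}$ is naturally isomorphic to $-\otimes_R R_{\mathfrak F}$ with $_RR_{\mathfrak F}$ flat \cite[Propositions~XI.3.4(e) and XI.3.11(b)]{S}, so it is an \emph{exact} functor; in particular it preserves monomorphisms, and by the previous Lemma (applied to the two consecutive $3$-term subcomplexes of each row) it carries the $\mathfrak F$-exact rows to genuinely exact rows of $R_{\mathfrak F}$-modules. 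Thus one obtains a commutative diagram of $R_{\mathfrak F}$-modules with exact rows, to which the classical four lemma can be applied.

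For (1): recalling that $t_i$ being $\mathfrak F$-epi means exactly that $B_i/t_i(A_i)$ is $\mathfrak F$-torsion, Lemma~\ref{1601}(a) gives that $(t_1)_{\mathfrak F}$ and $(t_3)_{\mathfrak F}$ are surjective, while $(t_4)_{\mathfrak F}$ is monic because $t_4$ is and $(-)_{\mathfrak F}$ is left exact. The epimorphism form of the four lemma then yields that $(t_2)_{\mathfrak F}$ is surjective, and since $B_2$ is $\mathfrak F$-torsionfree, Lemma~\ref{1601}(b) applied to $t_2\colon A_2\to B_2$ shows that $B_2/t_2(A_2)$ is $\mathfrak F$-torsion, i.e.\ that $t_2$ is $\mathfrak F$-epi.

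For (2): $t_1$ being $\mathfrak F$-epi gives $(t_1)_{\mathfrak F}$ surjective via Lemma~\ref{1601}(a), and $(t_2)_{\mathfrak F}$, $(t_4)_{\mathfrak F}$ are monic since $(-)_{\mathfrak F}$ is left exact; the monomorphism form of the four lemma then gives that $(t_3)_{\mathfrak F}$ is monic. Applying the exact functor $(-)_{\mathfrak F}$ to the exact sequence $0\to\ker(t_3)\to A_3\stackrel{t_3}{\longrightarrow}B_3$ one gets $(\ker t_3)_{\mathfrak F}=\ker\bigl((t_3)_{\mathfrak F}\bigr)=0$, and a module whose $\mathfrak F$-localization vanishes is $\mathfrak F$-torsion (the kernel of the canonical map $M\to M_{\mathfrak F}$ is the $\mathfrak F$-torsion submodule of $M$, and its cokernel is $\mathfrak F$-torsion, cf.\ \cite[Chapter~IX]{S}); hence $\ker(t_3)$ is $\mathfrak F$-torsion. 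If in addition $A_3$ is $\mathfrak F$-torsionfree, then $\ker(t_3)$ is a submodule of $A_3$ that is simultaneously $\mathfrak F$-torsion and $\mathfrak F$-torsionfree, hence zero, so $t_3$ is monic.

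I do not expect a serious obstacle: the statement is a direct transcription of the non-singular case, the two ingredients being Lemma~\ref{1601} and the exactness of $(-)_{\mathfrak F}$. The one point that genuinely uses the hypothesis is that \emph{perfectness} of $\mathfrak F$ upgrades $(-)_{\mathfrak F}$ from merely left exact to exact --- without it the localized rows need not remain exact and the argument collapses --- so the only thing to be careful about is to invoke perfectness precisely at that step. The $5$-lemma and $3\times3$-lemma in this setting go through identically, with the classical five lemma and $3\times3$-lemma replacing the classical four lemma.
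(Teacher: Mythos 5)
Your proposal is correct and follows exactly the route of the paper's own (two-line) proof: apply $(-)_{\mathfrak F}$, which is exact by perfectness, to get a diagram of $R_{\mathfrak F}$-modules with exact rows, then combine the classical four lemma with Lemma~\ref{1601} in both directions. The extra details you supply (left exactness giving that $(t_4)_{\mathfrak F}$ and $(t_2)_{\mathfrak F}$ are monic, and the fact that a module with vanishing localization is $\mathfrak F$-torsion) are exactly the ones the paper leaves implicit.
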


\begin{proof}
Apply the functor $-\otimes R_{\mathfrak{F}}$ to the commutative diagram, getting a commutative diagram of $R_{\mathfrak{F}}$-modules with exact rows. The classical 4-lemma and Lemma~\ref{1601} allow us to conclude.
\end{proof}

\begin{remark}{\rm
Let $R$ be a non-singular ring and let $\mathfrak{G}$ be the right Goldie topology, that is, the Gabriel topology given by the essential right ideals of $R$. In this case, the $\mathfrak{G}$-torsion $R$-modules are precisely the singular $R$-modules and the notion of $\mathfrak{G}$-exactness can be compared with that of $e$-exactness. Recall that, given a right $R$-module extension $A\leq B$, if $A\leq_e B$, then $B/A$ is a singular module, but the converse may fail to be true (even for abelian groups; see \cite[pp. 31-32]{goodearl}). This means that the notion of $e$-exactness is stronger than that of $\mathfrak{G}$-exactness, that is, given a cochain complex 
$$
\ldots\longrightarrow M^{i-1}\stackrel{f^{i-1}}{\longrightarrow}M^i\stackrel{f^i}{\longrightarrow}M^{i+1}\stackrel{f^{i+1}}{\longrightarrow}\ldots
$$
of right $R$-modules, if the cochain if $e$-exact, then it is also $\mathfrak{G}$-exact, but the converse is not true in general.
}
\end{remark}

\end{document}